\newtheorem{theorem}{Theorem}
\newtheorem{lemma}{Lemma}
\newtheorem{corollary}{Corollary}
\theoremstyle{definition}
\theoremstyle{remark}
\newtheorem{remark}{Remark}
\newcommand{\dontprint}[1]\relax
\newcommand{\sn}{\operatorname{sn}}
\newcommand{\cn}{\operatorname{cn}}
\newcommand{\dn}{\operatorname{dn}}
\newcommand{\I}{\mathrm{i}}
\begin{document}

\title[Poncelet property of the Boltzmann system]
{Poncelet property and quasi-periodicity
  of the integrable Boltzmann system}
\thanks{
The author is supported in part by the National Centre of Competence
in Research SwissMAP---The Mathematics of Physics---of the Swiss
National Science Foundation.
}

\author{Giovanni Felder}
\address{Department of Mathematics, ETH Zurich, 8092 Zurich, Switzerland}
\email{felder@math.ethz.ch}

\begin{abstract} We study the motion of a particle in a plane subject
  to an attractive central force with inverse-square law on one side
  of a wall at which it is reflected elastically. This model is a
  special case of a class of systems considered by Boltzmann which was
  recently shown by Gallavotti and Jauslin to admit a second integral
  of motion additionally to the energy. By recording the subsequent
  positions and momenta of the particle as it hits the wall we obtain
  a three dimensional discrete-time dynamical system. We show that
  this system has the Poncelet property: if for given generic values
  of the integrals one orbit is periodic then all orbits for these
  values are periodic and have the same period. The reason for this is
  the same as in the case of the Poncelet theorem: the generic level
  set of the integrals of motion is an elliptic curve, the
  Poincar\'e map is the composition of two involutions with fixed
  points and is thus the translation by a fixed element. Another
  consequence of our result is the proof of a conjecture of Gallavotti
  and Jauslin on the quasi-periodicity of the integrable Boltzmann system,
  implying the applicability of KAM perturbation theory to the Boltzmann
  system with weak centrifugal force.
\end{abstract}

\maketitle
\hfill In fond memory of Boris Dubrovin

\section{Introduction}\label{s-1}
In a 1868 paper with the unpretentious\footnote{Unlike certain
  name-dropping titles.} title ``Solution of a mechanical problem''
\cite{Boltzmann_1868}, L. Boltzmann, in his search of candidate
dynamical systems obeying his Ergodic Hypothesis, introduced and
studied a simple mechanical system.  It describes of a particle moving in the
region of a plane on one side of a straight line (the wall) and
subject to a central force whose centre is not on the wall.  When the
particle hits the wall it is reflected elastically.  The force
considered by Boltzmann is the sum of an attractive one with
inverse-square law and a centrifugal force with inverse-cube law. We
refer to \cite[Appendix D]{Gallavotti_2016} for an account of
Boltzmann's paper and of its significance for the evolution of
statistical mechanics.

As first conjectured by G. Gallavotti \cite[Appendix
D]{Gallavotti_2016}, and proved by him an I. Jauslin
\cite{Gallavotti_Jauslin_2020}, the system with pure inverse-square
law has, additionally to the energy, a second independent integral of
motion and is thus far from being ergodic. One way to express the
existence of this independent integral is that the particle moves on
arcs of Kepler trajectories with one focus at the centre and the
second focus on a fixed circle, see Fig.~\ref{f-1} and Theorem
\ref{t-1}. It was thus prudent of Boltzmann to add the centrifugal
term.

It is convenient to describe the Boltzmann system as a discrete-time
dynamical system by recording the point in phase
space at each collision. The map sending a point to the point at the
next collision is called the Poincar\'e map, and the orbits are obtained
by iterating the Poincar\'e map.

\begin{figure}
  \includegraphics[width=.4\textwidth]{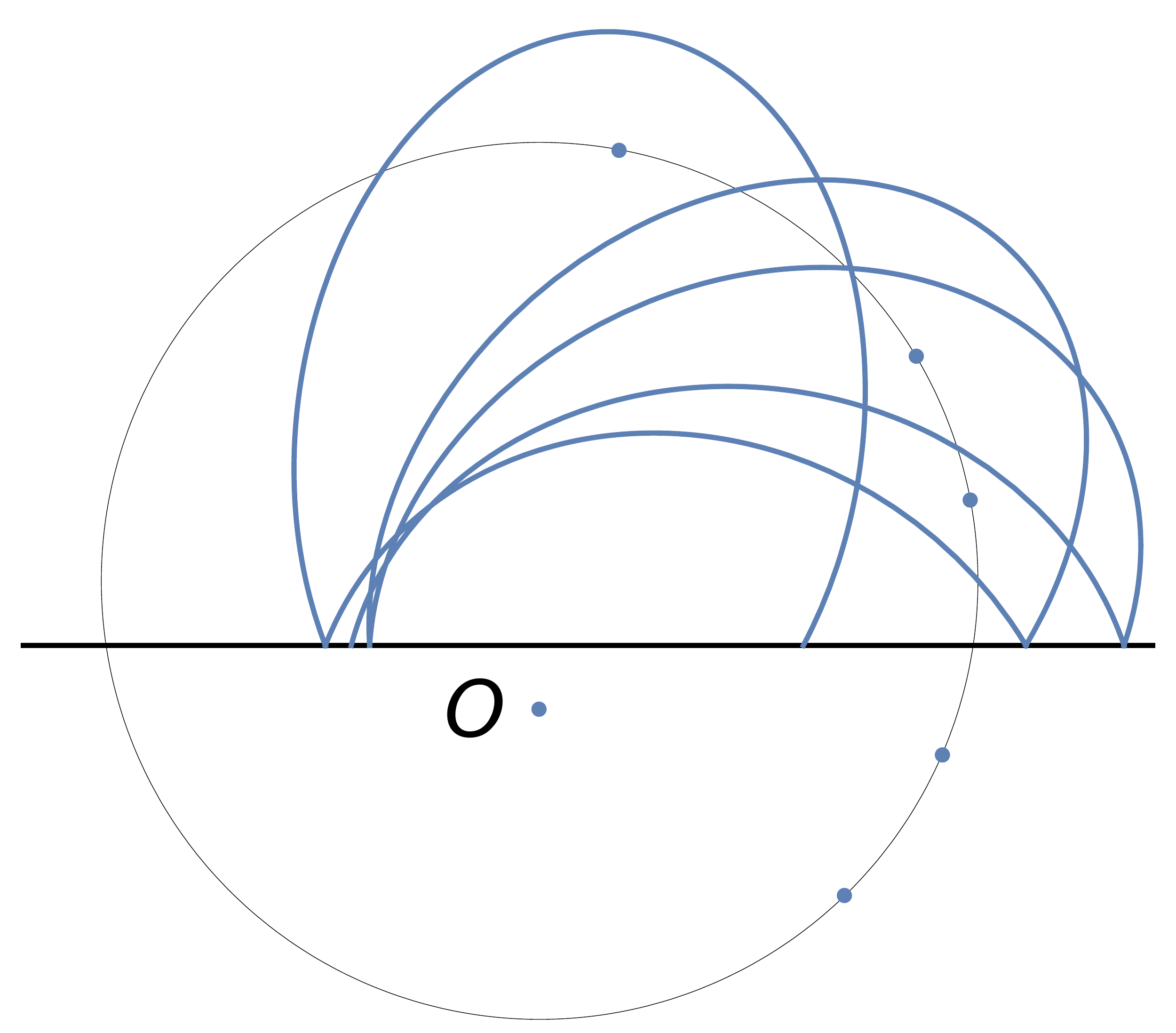}
  \caption{Trajectory of a particle subject to an attractive
    inverse-square law force bouncing off a wall (the horizontal
    line). The particle moves along elliptic arcs with one focus in
    the centre $O$ and whose second focus lies on a circle.}
  \label{f-1}
\end{figure}
In this paper we focus on this integrable case of the Boltzmann
system, with zero centrifugal force, and we show that it has the {\em
  Poncelet property}: for given values of the two integrals of motion,
either there are no periodic orbits or all orbits are
periodic. See Fig.~\ref{f-2} for an example
with period 3.

We call this the Poncelet property since it is shared by the
discrete-time dynamical system underlying the Poncelet problem.
Recall that the Poncelet problem asks for which pairs of ellipses
there is a polygon inscribed in one and circumscribing the
other. Poncelet's theorem states that if there is such a polygon for a
given pair of ellipses, then there are polygons with a vertex at an
arbitrary point of the circumscribing ellipse. Moreover all these
polygons have the same number of sides.

A beautiful explanation of the Poncelet theorem was given by
Ph. Griffiths and J. Harris \cite{Griffiths_Harris_1977},
\cite{Griffiths_Harris_1978}. Consider the space of pairs $(P,a)$
where $a$ is a tangent line to the inner ellipse and $P\in a$ is a
point of intersection of the tangent with the outer ellipse.  On this
space we have two natural involutions: $i$ maps $(P,a)$ to $(P',a)$
where $P'$ is the other point of intersection and $j$ maps $(P,a)$ to
$(P,a')$ where $a'$ is the other tangent through $P$. Taking iterates
$x$, $t(x)$, $t^2(x)$, \dots of the composition $t=j\circ i$ we obtain
a broken line consisting of chords of the outer ellipse that are
tangent to the inner one.  And a polygon is formed if and only if
$t^p(x)=x$ for some positive $p$. The observation of Griffiths and
Harris is that the (complexified) space of such pairs is naturally a
curve of genus one, and thus carries a free transitive action of its
Jacobian, an elliptic curve. It is a general fact that if we have two
non-trivial involutive automorphisms of a curve of genus one, both
having fixed points, then their composition is the translation by an
element of the elliptic curve. There are periodic orbits if and only
if this element has finite order, in which case all orbits are
periodic with the same period.

Our observation is that the integrable Boltzmann system behaves very
much in the same way: we consider for fixed generic values of the two
integrals of motion the pairs $(P,K)$ consisting of a Kepler conic $K$
and a intersection point $P\in K$ with the wall.  We show that
this space (after complexification and throwing in a couple of points
at infinity) is a smooth curve of genus one carrying two involutions.
The first
keeps the Kepler conic and changes $P$ to the other point of
intersection  with the wall and the second changes the conic to
the other conic through $P$ with the same integrals, obtained
by elastic reflection of the particle at the wall. The orbits
of the Boltzmann system are obtained by iterating the composition
$t=j\circ i$ of these involution, implying the Poncelet property.

Another consequence of this observation is the proof of a conjecture
of Gallavotti and Jauslin \cite{Gallavotti_Jauslin_2020}
that the motion is
quasi-periodic for generic values of the integrals, namely that on
generic level sets of the integrals there
is an {\em angle variable}, a map to the circle,
whose value increases by a fixed amount $\alpha$  at each iteration of $t$.
This amount, which depends on the values of the energy and the second
integral $D$, is a non-constant function of $D$.
 As discussed in
\cite{Gallavotti_2016,Gallavotti_Jauslin_2020}, this property, together
with Boltzmann's result that the map $t$ preserves an area form on the
level sets of the energy even in the presence of the centrifugal term,
allows one to apply the Kolmogorov--Arnold--Moser perturbation theory.
In this setting the relevant theorem is Moser's twist theorem
\cite{Moser_1962} implying that for small centrifugal term most of the
invariant circles on each energy surface are deformed to invariant
circles. Thus one would need a sufficiently large centrifugal term to
hope for an ergodic system.

Here we prove that the generic level sets of the integrals of motion
is diffeomorphic either to a circle or to a pair of disjoint
circles. The map $t$ is indeed mapped to a translation by an amount
$\alpha(D,E)$ whose $D$-derivative is generically non-vanishing,
possibly composed with an involution exchanging the two connected
components in the two-component case.  This implies both Conjecture 1
and 2 in \cite{Gallavotti_Jauslin_2020}.  In loc. cit. a conjectural
formula for the angle variable is given. It should also be possible to
check that conjecture with our explicit formulae.

The quasi-periodicity and the distinction between the cases of one and
two components arise from the classical theory of real elliptic curves
of Abel, Jacobi,\dots, see \cite{Du_Val_1973}.  The level set
$X_{\mathbb R}(D,E)$ is the set of real points of a smooth curve of
genus one defined over $\mathbb R$ on which (if non-empty) we have a
free and transitive action of the real points of an elliptic curve
which is a Lie group isomorphic to the circle group
$S^1=\mathbb R/\mathbb Z$ (unipartite case) or
$S^1\times\mathbb Z/2\mathbb Z$ (bipartite case).

In the unipartite case the system is (periodic or) quasi-periodic,
namely there is a diffeomorphism
$\varphi\colon S^1\to X_{\mathbb R}(D,E)$ so that
$\varphi^{-1}\circ t\circ \varphi$ is a translation
$\theta\mapsto \theta+\alpha(D,E)$ of the angle.  The diffeomorphism
can be given explicitly in terms of elliptic functions, see Theorem
\ref{t-4}. In the bipartite case the same holds for each component if
we replace the map by its iterate $t^2$. The map $t$ itself is given
by the translation by an element of the Lie group
$S^1\times \mathbb Z/2\mathbb Z$. If this element is not in the
connected component of the identity, the orbits jump back and forth
between the connected components. See Fig.~\ref{f-5} for a picture.

The transition between the unipartite and bipartite case happens when
we cross a curve in the space of parameters ($D=2$ and $D=-2$ in our
convention) where the elliptic curve degenerates to a nodal rational
curve.  We show that the unipartite case arises if $|D|<2$. If $D>2$
the real locus of the elliptic curve is bipartite and the action of
$t$ is by an element which is not in the identity component. If
$D<-2$, which happens only for sufficiently large positive energy, the
action is by an element in the identity component. Geometrically we
can understand the distinction from the constraint that the distance
between foci is smaller than the major axis, which is determined by
the energy. If $|D|>2$ all points on the circle on which the second
focus moves satisfy this constraint but if $|D|<2$ the motion is
confined to an arc of this circle.

In the next section we introduce the integrable case of the
Boltzmann system, state the main result and deduce the Poncelet
property. We also give an explicit example of a rational level set for
which all orbits have period 3. The family of
elliptic curve is constructed in Section \ref{s-3} and we discuss
the real locus and the quasi-periodicity in Section \ref{s-4}.

Recalling Boris Dubrovin's recommendation that we should not forget
that mathematics does not only consist of abstract theorems but also
of calculations, we give explicit formulas for the diffeomorphism
$\varphi$ and the rotation number $\alpha$.

\section{The integrable Boltzmann system}\label{s-2}

Let a particle subject to an attractive central force with
inverse-square law move in a plane on one side of a straight line (the
wall) not passing through the centre $O$.  When the particle hits the
wall it is reflected elastically.

To describe the system we record the position and momentum of the
particle each time it leaves the wall after a collision. We obtain a
three dimensional discrete-time dynamical system given by iterations
of the Poincar\'e map $t$ sending the position and the momentum of the
particle as it leaves the wall to the position and momentum of the
particle after the next collision.  We disregard the time it takes to go
from one collision to the next.

For clarity of exposition, let us assume for the further discussion
that the energy is negative and that the particle moves on the side of
the wall not containing the centre as in Fig.~\ref{f-1}. Then the
particle travels on arcs of ellipses with a focus in $O$, all with the
same major axis. Our discussion holds also for the case of zero or
positive energy, but one should place oneself in the projective plane
and allow the particle to wander on hyperbolae to infinity and hit the
wall from both sides.

Geometrically we can think of the Poincar\'e map as a map on pairs
$(P,K)$, where $K$ is an ellipse in the plane of motion with a focus
in $O$ and $P\in \mathcal K$ is an intersection point with the wall.
The particle at $P$ leaves the wall and follows the Kepler trajectory
$K$ until it reaches the other point of intersection $P'$.  At this point
the momentum is reflected and the particle continues on a new ellipse
$K'$ and $t(P,K)=(P',K')$. Thus the Poincar\'e map is the composition
$t=j\circ i$ of two maps
\[
  i\colon (P,K)\mapsto (P',K),
\]
exchanging the points of intersections of the ellipse with the wall
and
\[
  j\colon (P',K)\mapsto (P',K'),
\]
mapping a Kepler trajectory $K$ to the trajectory $K'$ whose
momentum at $P'$ is reflected at the wall.

The obvious fact that $i$ and $j$ are involutions will be important later.

We thus obtain a discrete dynamical system $(X_{\mathbb R},t)$, which
we call the integrable Boltzmann system, on the three-dimensional
configuration space $X_{\mathbb R}$ of pairs $(P,K)$. The maps $i,j$
make sense as birational maps in the complex domain of pairs $(P,K)$
where $K$ is a smooth conic in the two-dimensional affine space and
$P$ is an intersection point with an affine line. We thus get a
complexification $(X,t)$ where $t\colon X\dashrightarrow X$ is a
birational map.

The next observation is that the Poincar\'e map preserves a second
integral of motion in addition to the energy.

\begin{theorem}(Gallavotti--Jauslin
  \cite{Gallavotti_Jauslin_2020})\label{t-1} The Poincar\'e map of the
  integrable Boltzmann system for a particle of mass $m$ and a wall at
  distance $h$ to the centre preserves the energy $E$ and the
  combination $D=L^2-2 h A_\perp$ of the angular momentum $L$ and the
  component $A_\perp$ of the
  Laplace--Runge--Lenz vector perpendicular to the wall.
\end{theorem}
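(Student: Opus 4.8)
The plan is to treat the Poincar\'e map as the composition of two pieces---free Kepler motion along an arc between collisions and the elastic reflection at the wall---and to verify conservation of $E$ and of $D=L^2-2hA_\perp$ separately on each piece. Conservation along the Kepler arcs is classical: for the pure inverse-square potential $-k/r$ the energy $E$, the scalar angular momentum $L=xp_y-yp_x$ about $O$, and the Laplace--Runge--Lenz vector $\mathbf{A}=\mathbf{p}\times\mathbf{L}-mk\,\hat{\mathbf{r}}$ are all first integrals of the flow. Since $A_\perp$ is a fixed component of $\mathbf{A}$, both $E$ and $D=L^2-2hA_\perp$ are automatically constant along each arc. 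The content of the theorem therefore lies entirely in the behaviour at a collision.

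For the reflection I would fix coordinates with $O$ at the origin and the wall the horizontal line $\{y=h\}$, so that the perpendicular direction is $\hat{y}$ and $A_\perp=A_y$. Elastic reflection at a point $P=(x_0,h)$ keeps the position fixed, reverses the perpendicular momentum $p_y\mapsto -p_y$, and leaves the parallel momentum $p_x$ unchanged. Energy conservation is then immediate, since $|\mathbf{p}|^2=p_x^2+p_y^2$ and $r=|P|$ are both unaffected by $p_y\mapsto -p_y$.

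The one genuine computation is the invariance of $D$, for which $L$ and $A_\perp$ jump \emph{individually}: at the wall one has $L=x_0p_y-hp_x$ and $A_\perp=-Lp_x-mk\,h/r$, both of which change under $p_y\mapsto -p_y$. The point is to substitute these into $D$ and exhibit the cancellation
\[
  D=L^2-2hA_\perp=L^2+2hLp_x+2mk\frac{h^2}{r}=x_0^2p_y^2-h^2p_x^2+2mk\frac{h^2}{r},
\]
in which the cross terms linear in $p_xp_y$ drop out. The right-hand side depends on the momentum only through $p_x^2$, $p_y^2$ and through $r$, all manifestly even in $p_y$; hence $D$ is preserved by the reflection. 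Combining the two parts shows that $E$ and $D$ are preserved by the full Poincar\'e map $t=j\circ i$.

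I expect the main---indeed the only---obstacle to be bookkeeping: making the sign conventions for $L$, for the perpendicular component of $\mathbf{A}$, and for the signed distance $h$ mutually consistent, so that the coefficient $-2h$ in $D$ is precisely the one producing the cancellation above. Conceptually there is nothing delicate, since $D$ is engineered exactly so that the $p_xp_y$ variation of $L^2$ cancels that of $-2hA_\perp$; but one must check that the chosen normalization of the Laplace--Runge--Lenz vector is the one matching the stated formula for $D$.
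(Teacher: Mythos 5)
Your proposal is correct and takes essentially the same route as the paper: both reduce the statement to the behaviour at a single reflection $p_\perp\mapsto -p_\perp$ at the wall and verify invariance of $D$ by direct substitution of $L=x_0p_\perp - hp_\parallel$ and $A_\perp=-Lp_\parallel-mkh/r$. Your observation that $D$ restricted to the wall equals $x_0^2p_\perp^2-h^2p_\parallel^2+2mkh^2/r$, manifestly even in $p_\perp$, is just a slight repackaging of the paper's check that $(L')^2-2A_\perp'=L^2-2A_\perp$.
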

Recall that the  (Hermann--Bernoulli--)
Laplace--Runge--Lenz vector is the conserved quantity
$\mathbf A=\mathbf p\times \mathbf L-m\kappa\,
\mathbf{x}/|\mathbf{x}|$ of the Kepler problem with Hamiltonian
$|\mathbf p|^2/2m-\kappa/|\mathbf x|$. In terms of trajectories
$\mathbf A$ is a vector in the plane of motion along the major axis
whose length is the eccentricity times $m\kappa$.

Note that $i$ preserves all conserved quantities
$E,\mathbf L,\mathbf A$ of the Kepler problem. Therefore in fact both
involution $i$ and $j$ preserve $E$ and $D$.

We also observe that because of the classical relation
$2mEL^2=|\mathbf A|^2-m^2\kappa^2$ between conserved quantities of the
Kepler problem, $D$ can be written as
\[
  D=\frac{|\mathbf A|^2}{2mE}-2h\, A_\perp-\frac {m\kappa^2}{2E},
\]
so that the Laplace--Runge--Lenz vector moves on a circle of radius
\[
  R=\sqrt{m^2\kappa^2+2mDE+4h^2 m^2E^2}.
\]
More geometrically, noticing that
$\mathbf A/mE$ is the vector connecting the centre to the other
focus, we can say that the particle moves along arcs of Kepler
ellipses whose second focus lies on a circle of radius $R/m|E|$
centred at the mirror image of the centre with respect to the
wall.~\footnote{In \cite{Gallavotti_Jauslin_2020} the authors consider
  the case of a particle of mass $m=1$ in a potential with coupling
  constant $\kappa=2\alpha$ and energy $E=A/2$; $D$ is
  called $R$ there. The radius of the circle on which the midpoint of
  the ellipse moves ($R/2|E|m$ in our notation) is denoted by
  $R_0$ in loc.~cit.}

From now on we choose for convenience units of time, length and mass
so that $\kappa=1$, $h=1$, $m=1$.

The theorem is reduced in
\cite{Gallavotti_Jauslin_2020} to a geometric theorem on
ellipses. Here is an alternative, possibly more direct proof.  We can
assume that the motion takes place in the plane with coordinates
$x_1,x_2$ with the centre at the origin and the wall at $x_2=1$. The
phase space has coordinates $x_1,x_2,p_1,p_2$ and the integrals of motion
of the Kepler problem are
\[
  E=\frac{p_1^2+p_2^2}{2}-\frac 1{r}, \quad L=x_1p_2-x_2p_1, \quad
  A_1=p_2L-\frac{x_1}{r}, \quad A_2=-p_1L-\frac{x_2}{r}.
\]
Here $r=\sqrt{x_1^1+x_2^2}$ is the distance to the centre.  In these
coordinates,
\[
  D=L^2-2A_2=\frac{A_1^2+A_2^2-1}{2E}-2A_2.
\]
When the particle hits the wall at a point with $x_2=1$, the sign of
$p_2$ is changed. Thus the angular momentum $L$ and the orthogonal
component $A_2$ of the Laplace--Runge--Lenz vector change to $L'=-L-2p_1$,
$A_2'=-p_1L'-1/r=A_2+2p_1L+2p_1^2$, respectively. Therefore
$(L')^2-2A_2'=L^2-2A_2$, as claimed.

The complexified configuration space $X$ is thus foliated by the level
sets $X(D,E)$ of the integrals of motion.
\begin{theorem}\label{t-2}
  Let $D,E\in\mathbb C$ such that $D^2\neq 4$, $1+2ED+4E^2\neq 0$,
  $D+2E\neq0$.  Then the level set $X(D,E)$ has a compactification
  $\bar X(D,E)$ which is a smooth projective curve of genus 1, and $i$
  and $j$ extend to automorphisms with fixed points.
\end{theorem}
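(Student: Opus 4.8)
The plan is to realize $X(D,E)$ as a double cover of a rational curve via the involution $i$, compute its branch locus, and read off the genus from Riemann--Hurwitz. Fixing $E$, a Kepler conic $K$ with focus at the origin is determined by its Laplace--Runge--Lenz vector $(A_1,A_2)$, and fixing $D$ confines $(A_1,A_2)$ to the conic
\[
  C_0\colon\qquad A_1^2+(A_2-2E)^2=1+2ED+4E^2 ,
\]
coming from $D=(A_1^2+A_2^2-1)/2E-2A_2$. A point $P=(x_1,1)$ on the wall lies on $K$ precisely when the Cartesian equation $x_1^2+x_2^2=(L^2-A_1x_1-A_2x_2)^2$ of $K$, with $L^2=D+2A_2$, holds at $x_2=1$, i.e.
\[
  (A_1x_1-A_2-D)^2=x_1^2+1 .
\]
Thus $X(D,E)$ is the affine curve in $\mathbb{A}^3_{(A_1,A_2,x_1)}$ cut out by $C_0$ and this incidence relation. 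Since $1+2ED+4E^2\neq0$, the conic $C_0$ is smooth, hence $C_0\cong\mathbb{P}^1$, and the projection $(A_1,A_2,x_1)\mapsto(A_1,A_2)$ exhibits $X(D,E)$ as a double cover of $C_0$ whose deck transformation is exactly $i$, the interchange of the two roots $x_1$ of the quadratic incidence relation.

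Next I locate the branch points. Writing the incidence relation as $(A_1^2-1)x_1^2-2A_1(A_2+D)x_1+((A_2+D)^2-1)=0$, its discriminant is $4\bigl(A_1^2+(A_2+D)^2-1\bigr)$, which on $C_0$, substituting $A_1^2=1+2ED+4E^2-(A_2-2E)^2$, collapses to
\[
  4(D+2E)(2A_2+D).
\]
Because $D+2E\neq0$ this is not identically zero, so the cover is genuinely of degree two; its zeros on $C_0$ are the two points with $2A_2+D=0$, namely $(A_1,A_2)=(\pm\sqrt{1-D^2/4},-D/2)$, distinct since $D^2\neq4$. The cover ramifies also over the two points at infinity of $C_0$ (the circular points $[1:\pm\I:0]$, where the second focus of $K$ recedes to infinity and one root $x_1$ escapes): parametrizing $C_0$ by $t\in\mathbb{P}^1$ and clearing denominators turns the cover into $W^2=\mathrm{const}\cdot N(t)(1+t^2)$ with $\deg N=2$, whose four roots are $t=\pm\I$ and the two zeros of $N$. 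Under the three hypotheses these four points are distinct, so the quartic is squarefree, $X(D,E)$ is irreducible, and Riemann--Hurwitz gives $2g-2=2(-2)+4=0$, i.e. $g=1$.

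I then define $\bar X(D,E)$ to be the smooth projective model of this double cover; concretely it is $X(D,E)$ together with the two points lying over the circular points of $C_0$. As the smooth double cover of $\mathbb{P}^1$ branched at four distinct points it is smooth of genus one, and I check smoothness at the two adjoined points in a chart at infinity. Both $i$ and $j$ are birational involutions of $X(D,E)$ and hence extend uniquely to automorphisms of the smooth projective curve $\bar X(D,E)$. The fixed points of $i$ are its four ramification points, i.e. the tangency configurations $P=P'$ together with the two points at infinity; in particular $i$ has fixed points. For $j$, which fixes $P$ and exchanges $K$ with its reflection $K'$, the fixed points are the configurations with $K'=K$, namely those in which the particle meets the wall along the normal ($p_1=0$, equivalently $A_2=-1/\sqrt{x_1^2+1}$); these cut out a nonempty finite locus on the curve for generic $(D,E)$, so $j$ too has fixed points.

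The main obstacle lies in two places. First, the clean collapse of the discriminant to $4(D+2E)(2A_2+D)$ and the correct bookkeeping at infinity---verifying that the circular points of $C_0$ are honest, distinct branch points and that $\bar X(D,E)$ is smooth there---is precisely where the three non-degeneracy hypotheses must be matched to the geometry: $D+2E\neq0$ keeps the cover of degree two, $D^2\neq4$ separates the two affine branch points, and $1+2ED+4E^2\neq0$ keeps $C_0$ a smooth conic with two honest points at infinity. Second, unlike $i$, the involution $j$ is not the deck transformation of an evident fibration (the projection to $x_1$ is four-to-one, since for fixed $x_1$ the incidence relation factors into two lines each meeting $C_0$ twice), so its fixed points have to be produced by the direct perpendicular-incidence analysis rather than read off as ramification.
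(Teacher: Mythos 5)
Your construction is essentially the paper's: you exhibit $X(D,E)$ as the double cover of the smooth conic $C_0$ of Laplace--Runge--Lenz vectors with deck transformation $i$, compute that the branch locus is cut out by $(D+2E)(2A_2+D)=0$ together with the two circular points at infinity, get genus one from Riemann--Hurwitz (the paper states the same fact about double covers of $\mathbb{P}^1$ with four simple branch points), and extend $i,j$ using the fact that a birational map between smooth projective curves is an isomorphism. The matching of the three hypotheses to the three possible degenerations is also exactly as in the paper.

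Two points need repair, one of them substantive. The theorem asserts that $j$ has fixed points for \emph{every} $(D,E)$ satisfying the three hypotheses, but you only claim the perpendicular-incidence locus is nonempty ``for generic $(D,E)$''. A nontrivial holomorphic involution of a genus-one curve has either four fixed points or none (it could a priori act as translation by a point of order two), and in the latter case $t=j\circ i$ would be an involution rather than a translation, so Corollary \ref{c-1} would fail on the corresponding parameter locus; genericity therefore does not suffice. The paper closes this by solving the fixed-point equations explicitly: the fixed points of $j$ are the points of the level set with $A_2=2E/(1\pm R)$, which exist over $\mathbb C$ throughout the allowed range. Your criterion $p_1=0$, i.e.\ $A_2=-1/\sqrt{x_1^2+1}$, is correct, but you must intersect it with the level set and verify nonemptiness without the word ``generic''. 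The minor point: the smooth model is not obtained from $X(D,E)$ by adjoining only the two points over the circular points of $C_0$; one must also adjoin points over $A_1=\pm1$, where the incidence relation degenerates to a linear equation in $x_1$ and one of the two intersection points with the wall escapes to infinity along the wall (this is why the paper passes to the variable $z=(1-A_1^2)x+A_1(A_2+D)$ before taking the closure in $\mathbb{CP}^3$). This does not affect the genus count, but it does affect your claim to ``check smoothness at the two adjoined points'', since those are not the only points being adjoined.
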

The construction of the compactification and the proof of this theorem
is presented in the next section, see Theorem \ref{t-3}.

Thus we have a curve of genus one with two holomorphic involutions and
we can reproduce the argument of Griffiths and Harris on the Poncelet
theorem: any smooth curve of genus one has a free transitive action of
a group, the associated elliptic curve. The composition of two
involutions with fixed points is the action an element of this
group. More explicitly, by uniformization we have an isomorphism
\[
  \bar X(D,E)\to \mathbb C/\Lambda,
\]
for some lattice $\Lambda=\mathbb Z\omega_1+\mathbb Z\omega_2$.  Any
holomorphic automorphism of $\mathbb C/\Lambda$ is of the form
$z\mapsto az+b\operatorname{mod}\Lambda$. An involution has
$a=\pm1$. A non-trivial involution with fixed points has $a=-1$. Thus
the composition of two non-trivial involutions $z\mapsto -z+b$,
$z\mapsto -z+c$ is the translation $z\mapsto z+v$ by
$v=c-b\in\mathbb C/\Lambda$.
This implies the following result.
\begin{corollary}\label{c-1}
  Let $D,E\in\mathbb C$ obey the assumptions of Theorem \ref{t-2}. Then
  $t$ is the action of an element
  of the associated elliptic curve. In particular there is a
  biholomorphic map $\varphi\colon \mathbb C/\Lambda \to\bar X(D,E)$
  for some lattice $\Lambda\subset \mathbb C$ such that
  $\varphi^{-1}\circ t\circ \varphi(u)=u+T$ for all
  $u\in\mathbb C/\Lambda$ and some $T=T(D,E)\in\mathbb C/\Lambda$.
\end{corollary}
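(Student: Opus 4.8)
The plan is to formalize the uniformization argument sketched just above the statement. By Theorem \ref{t-2}, under the stated hypotheses $\bar X(D,E)$ is a smooth projective curve of genus $1$ on which the birational involutions $i,j$ extend to holomorphic automorphisms, each having at least one fixed point. First I would fix an arbitrary base point $O\in\bar X(D,E)$, turning the genus-one curve into an elliptic curve with identity $O$, and invoke the uniformization theorem: the universal cover of a compact Riemann surface of genus $1$ is $\mathbb C$, so there is a lattice $\Lambda=\mathbb Z\omega_1+\mathbb Z\omega_2$ and a biholomorphism $\varphi\colon \mathbb C/\Lambda\to\bar X(D,E)$ (equivalently, one may use the Abel--Jacobi map).

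Next I would classify the holomorphic self-maps of $\mathbb C/\Lambda$. Any automorphism $f$ lifts through the covering $\mathbb C\to\mathbb C/\Lambda$ to a holomorphic map $F\colon\mathbb C\to\mathbb C$ with $F(z+\lambda)-F(z)\in\Lambda$ for every $\lambda\in\Lambda$; since $\Lambda$ is discrete this difference is constant in $z$, so $F'(z+\lambda)=F'(z)$, making $F'$ a bounded entire function, hence constant by Liouville. Therefore $F(z)=az+b$ with $a\Lambda=\Lambda$, and conjugating $i$ and $j$ by $\varphi$ yields affine maps $z\mapsto az+b$. From $i^2=j^2=\mathrm{id}$ we get $a^2=1$, so $a=\pm1$. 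Here the fixed-point hypothesis is decisive: if $a=1$ the map is a translation $z\mapsto z+b$, which has a fixed point only when $b\in\Lambda$, i.e. only when it is the identity; as $i$ and $j$ are nontrivial involutions with fixed points, this forces $a=-1$ for both.

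Writing the conjugate of $i$ as $z\mapsto -z+b$ and that of $j$ as $z\mapsto -z+c$, the conjugate of $t=j\circ i$ is $z\mapsto -(-z+b)+c=z+(c-b)$, a pure translation by $T:=c-b\in\mathbb C/\Lambda$. Hence $\varphi^{-1}\circ t\circ\varphi(u)=u+T$ for all $u\in\mathbb C/\Lambda$, so $t$ is the action of the element $T$ of the associated elliptic curve, as claimed. The statement is largely a packaging of standard facts, so I do not expect a serious obstacle; the only points demanding care are the affine-lift lemma and the observation that the ``with fixed points'' clause of Theorem \ref{t-2} is exactly what excludes the spurious case $a=1$ (a nontrivial $2$-torsion translation), which would otherwise break the conclusion that $t$ is a translation.
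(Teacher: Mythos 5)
Your argument is correct and is essentially the paper's own: the text preceding Corollary~\ref{c-1} uses exactly this uniformization, the classification of automorphisms of $\mathbb C/\Lambda$ as $z\mapsto az+b$, the deduction that a nontrivial involution with fixed points has $a=-1$, and the computation that the composition of two such involutions is the translation by $c-b$. Your version merely spells out the standard lifting/Liouville step and the exclusion of the $a=1$, $2$-torsion case, which the paper leaves implicit.
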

One consequence of this is the
Poncelet property. The sequence $(t^n(x))_{n\in\mathbb Z}$ of images
of $x\in\bar X(D,E)$ of iterates of $t$ is called the orbit through
$x\in \bar X(D,E)$. An orbit is called periodic if $t^p(x)=x$ (and
thus $t^{n+p}(x)=t^n(x)$ for all $n$) for some positive integer $p$.
The minimal such $p$ is called the period of the orbit.
\begin{corollary}\label{c-2} 
  If for some $D,E$ obeying the assumptions of Theorem \ref{t-2},
  $\bar X(D,E)$ has a periodic orbit then all
  orbits in $\bar X(D,E)$ are periodic and they all have the same
  period.
\end{corollary}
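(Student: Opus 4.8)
The plan is to derive Corollary \ref{c-2} directly from Corollary \ref{c-1}, which has already reduced the dynamics to a translation $u\mapsto u+T$ on the elliptic curve $\mathbb{C}/\Lambda$. Under the biholomorphism $\varphi$, an orbit through $x=\varphi(u)$ is periodic with period $p$ precisely when $t^p(x)=x$, which by Corollary \ref{c-1} translates into $u+pT\equiv u\pmod{\Lambda}$, i.e. $pT\equiv 0\pmod{\Lambda}$. The crucial observation is that this last condition $pT\in\Lambda$ does \emph{not} depend on the base point $u$: it is a statement purely about the translation element $T=T(D,E)$ and the lattice $\Lambda$. This uniformity is exactly what makes the Poncelet property work.

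First I would fix $D,E$ satisfying the hypotheses of Theorem \ref{t-2} and suppose that \emph{some} orbit is periodic, say the orbit through $x_0=\varphi(u_0)$ has period $p$. By the computation above this means $pT\equiv 0\pmod\Lambda$. Now let $x=\varphi(u)$ be an arbitrary point of $\bar X(D,E)$. Then
\[
  t^p(x)=\varphi(u+pT)=\varphi(u)=x,
\]
since $pT\in\Lambda$. Hence every orbit is periodic and its period divides $p$. This already shows the first assertion: periodicity of one orbit forces periodicity of all orbits.

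It remains to check that all orbits share the \emph{same} period, not merely periods dividing a common $p$. For this I would let $q=q(D,E)$ be the order of the element $T$ in the group $\mathbb{C}/\Lambda$, i.e. the least positive integer with $qT\equiv 0\pmod\Lambda$; this $q$ is finite exactly because some orbit was assumed periodic. For any point $x=\varphi(u)$ and any positive integer $n$ one has $t^n(x)=\varphi(u+nT)=x$ if and only if $nT\equiv0\pmod\Lambda$, again independently of $u$, and the smallest such $n$ is by definition $q$. Therefore every orbit has period exactly $q$, which completes the proof.

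I do not expect any serious obstacle here, as the essential work has been done in passing from the geometry to the translation model in Theorem \ref{t-2} and Corollary \ref{c-1}. The only point requiring a little care is the distinction between ``period divides $p$'' and ``period equals $q$''; the clean way to handle it is to phrase everything in terms of the order of $T$ in the group $\mathbb{C}/\Lambda$ and to stress that the condition $nT\in\Lambda$ is manifestly independent of the base point $u$. This base-point independence is the entire content of the Poncelet phenomenon, and it comes for free once $t$ is known to be a translation rather than a general automorphism.
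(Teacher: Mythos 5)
Your argument is correct and is exactly the paper's reasoning: Corollary~\ref{c-1} reduces $t$ to the translation $u\mapsto u+T$, so $t^n(x)=x$ if and only if $nT\equiv 0 \bmod \Lambda$, a condition independent of the base point, whence every orbit has period equal to the order of $T(D,E)$. The paper treats this as immediate (stating only that the common period is the order of $T$), and your slightly more careful handling of ``divides $p$'' versus ``equals $q$'' is a harmless elaboration of the same proof.
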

The orbits have period $p$ if $T(D,E)$ has order $p$, i.e., if
$p$ is minimal such that $p\,T(D,E)\equiv 0\operatorname{mod}\Lambda$.

Thus we can determine the pairs $(D,E)$ for which all orbits have
period $p$ by solving the equation $t^p(x)=x$ where $x\in X(D,E)$ is
arbitrary. We get equations of countably many algebraic curves in the
$(D,E)$ plane for which all orbits are periodic.  The case $p=1$ where
$t$ is the identity occurs in the degenerate range of parameter
$D+2E=0$. In this case all Kepler trajectories are tangent to the
wall. An orbit of period $p=2$ corresponds to a conic intersecting the
wall at right angle at both points of intersection. This arises only
if $1+2ED+4E^2=0$, which is excluded by the theorem. In this degenerate case
the real locus
$X_{\mathbb R}(D,E)$ consists of two points consisting of a conic
which is symmetric with respect to the wall with its two intersection
points.  The first interesting case is $p=3$: if $(D,E)$ lies the
algebraic curve
\[
  4(D^2-4)E^2+4D(D^2-3)E+D^4-2D^2-3=0,
\]
for example if $D=7/4,E=-5/24$, all orbits in $\bar X(D,E)$ are
periodic with period $3$, see Fig.~\ref{f-2}.
\begin{figure} 
  \includegraphics[width=.4\textwidth]{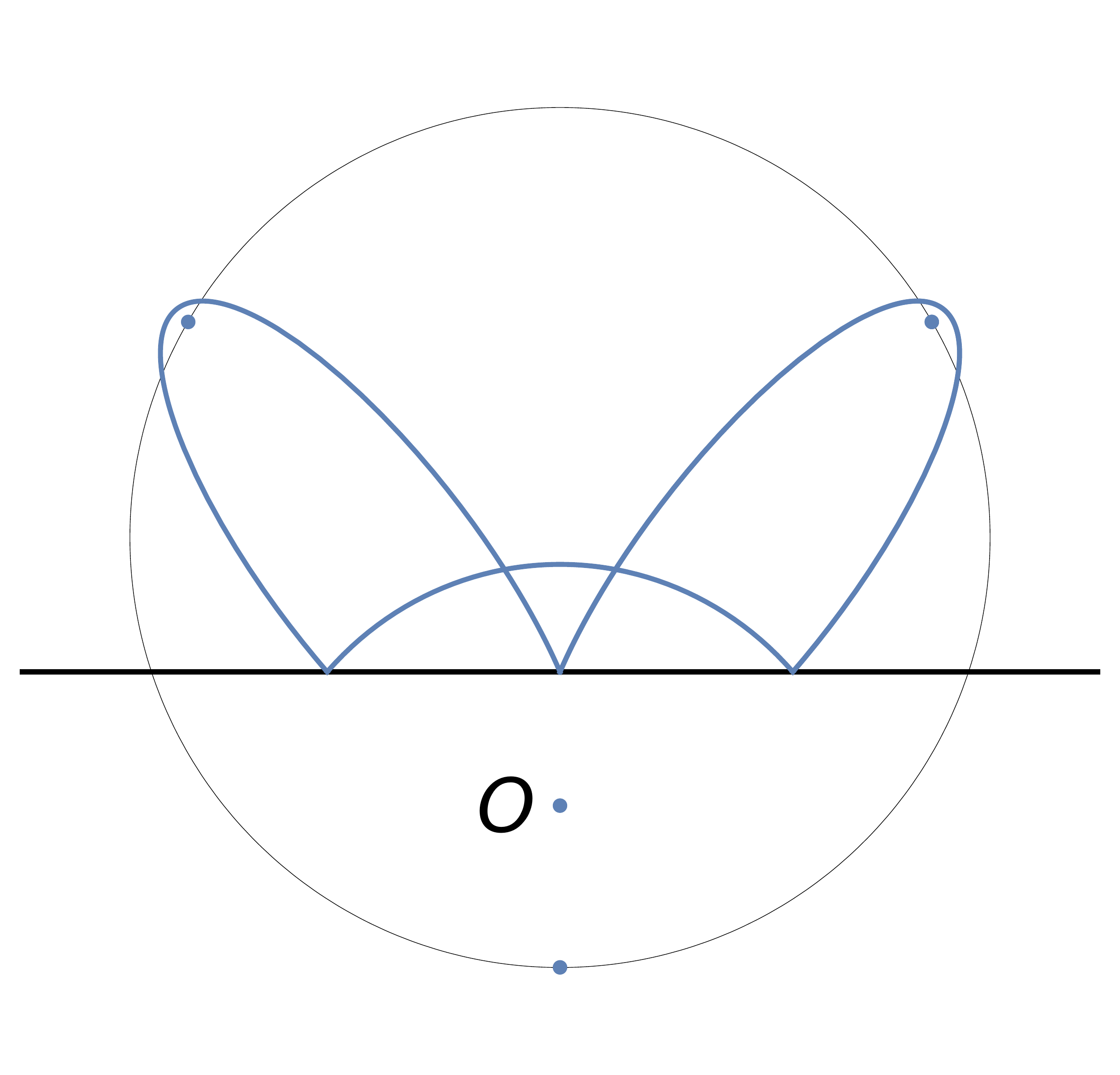}
  \includegraphics[width=.4\textwidth]{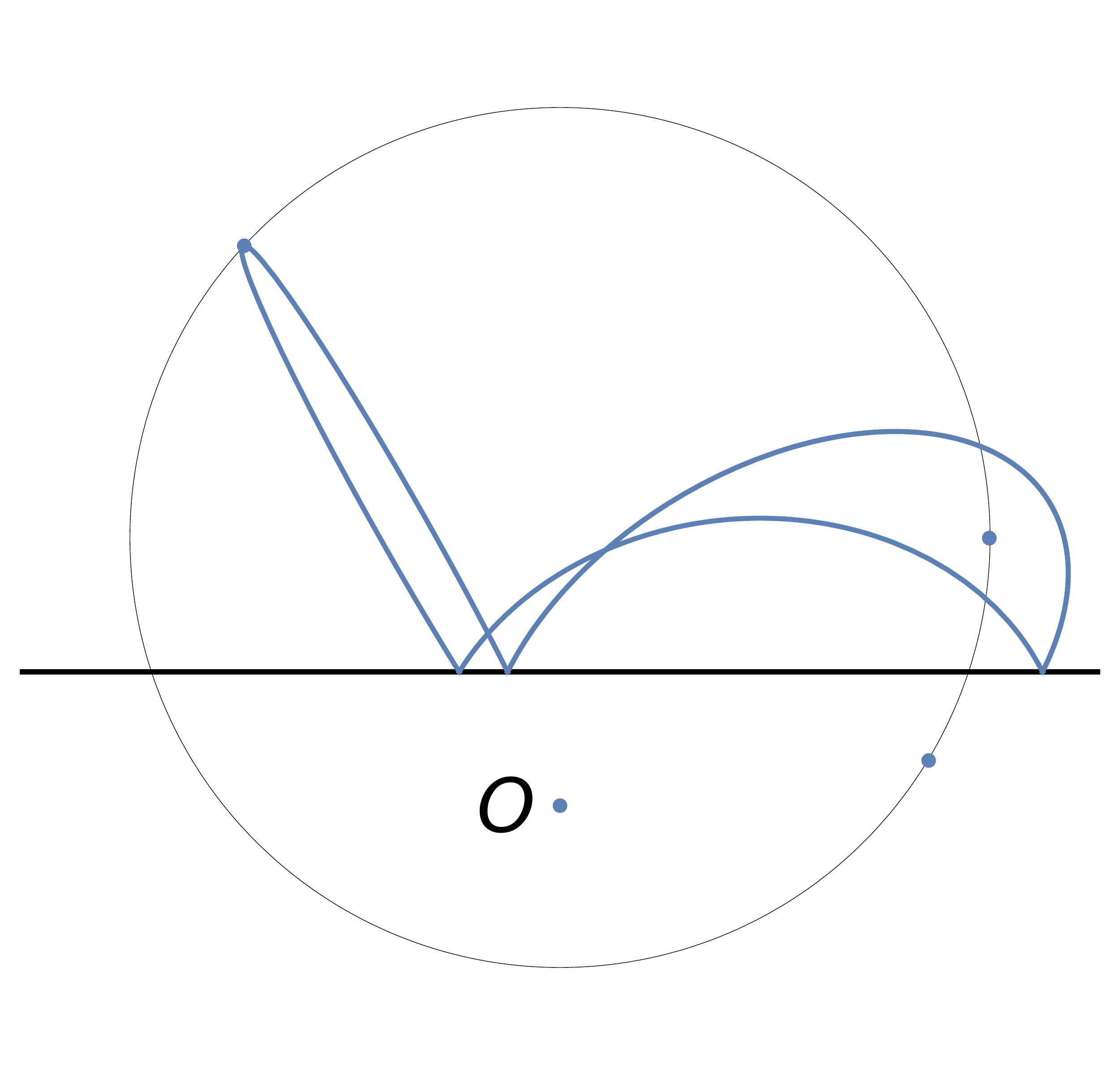}
  \caption{Two periodic orbits with integrals $D=7/4$, $E=-5/24$ in
    natural units.}
  \label{f-2}
\end{figure}

We conclude this section by discussing qualitatively the degenerate
cases not covered by the theorem, see Theorem \ref{t-3} for more
detailed statements.  As mentioned above, if $D+2E=0$ then $\bar X(D,E)$
consists of Kepler conics tangent to the wall. It is a rational curve.
If $D=\pm 2$ the elliptic curve degenerates to a
nodal curve with one node.  The node is a fixed point corresponding to
a degenerate conic (double line) orthogonal to the wall. The generic
orbit $t^j(x)$ converges to it both for $j\to\infty$ and
$j\to -\infty$.  Finally for $1+2ED+4E^2=0$ we get a nodal curve with
two irreducible components and $t$ maps one node to the other. The
nodes represent a conic which is symmetric with respect to the wall
with its two intersection points. In the latter two cases the Poncelet
property does not hold.
\section{A family of elliptic curves}\label{s-3}
We parametrize a point $(P,K)\in X(D,E)$ by the first coordinate $x$
of $P=(x,1)$ and the Laplace--Runge--Lenz vector $(A_1,A_2)$. The
corresponding Kepler trajectory (for $E<0$) is then determined by the
properties that the the foci are $(0,0)$, $(A_1/E,A_2/E)$ and the
major axis is $2a=-1/E$, see, e.g., \cite[Section
II]{van_Haandel_Heckmann_2009}.  From the condition that the sum of
distances to the foci is $2a$ we deduce the equation of the Kepler
ellipse
\[
  x_1^2+x_2^2=\left(L^2-A_1x_1-A_2x_2\right)^2, \quad
  L^2=\frac{A_1^2+A_2^2-1}{2E}.
\]
Since $D=L^2-2A_2$ this can be written as
\[
  x_1^2+x_2^2=\left(2A_2+D-A_1x_1-A_2x_2\right)^2.
\]
This also holds for $E\geq0$ with a similar derivation. Setting $x_2=1$
gives the equation for the two points of intersection of the Kepler
trajectory with the wall.

Thus $X(D,E)$ is defined as the algebraic set in the affine space with
coordinates $(x,A_1,A_2)$ by the equations
\begin{align}
  A_1^2+A_2^2-4EA_2&=1+2DE,\label{e-1}
  \\
  x^2+1&=(A_2+D-A_1x)^2.\label{e-2}
\end{align}
It will be useful, especially for the study of the real locus, to
introduce the parameter $R^2=1+2DE+4E^2$, so that \eqref{e-1}
describes a circle of radius $R$, and write $\eqref{e-1}$ as
\begin{equation}\label{e-1'}
  A_1^2+(A_2-2E)^2=R^2.
\end{equation}
The formulae for the involutions are
\begin{align}
  i(x,A_1,A_2)&=\left(-\frac{2(A_2+D)A_1}{1-A_1^2}-x,A_1,A_2\right),\label{e-i}
  \\
  j(x,A_1,A_2)&=(x,A_1',A_2'),\notag
  \\
  A_1'&=\frac{x^2-1}{x^2+1}A_1-\frac{2x}{x^2+1}A_2+\frac{4xE}{x^2+1},\label{e-j}
  \\
  A_2'&=-\frac{2x}{x^2+1}A_1-\frac{x^2-1}{x^2+1}A_2+\frac{4x^2E}{x^2+1}.\notag
\end{align}
The involution $i$ exchanges the two solutions of \eqref{e-2}. The
second can be deduced from the calculation of how the Laplace--Runge--Lenz
vector changes at a reflection, as in the proof of Theorem
\ref{t-2}. We need to know two things from these formulae for the
proof of Theorem \ref{t-2}:
\begin{enumerate}
\item[(a)] $i$ and $j$ are rational map, i.e., maps that are defined on
  some dense open set and are given by rational functions of the
  coordinates. Since $i,j$ are involutions (where defined) they are
  moreover birational maps, namely rational maps with rational
  inverses.
\item[(b)] Both have fixed points (in the complex domain). For $i$ they
  correspond to degenerate Kepler trajectories (double lines) and
  occur when the quadratic equation \eqref{e-2} for $x$ has double
  roots. The fixed points of $j$ are points $(x,A_1,A_2)$ so that the
  Kepler conic parametrized by $(A_1,A_2)$ meets the wall at $x$ at a
  right angle.  The values of the coordinates $(x,A_1,A_2)$
  at the fixed point can be easily computed. For $i$
  they are the solutions of \eqref{e-1}
  such that $A_2=-D/2$. For $j$ they are the solutions such that
  $A_2=2E/(1\pm R)$.
\end{enumerate}
We turn to the description of the compactification of the space
$X(D,E)$ of solutions of \eqref{e-1}, \eqref{e-2}. 

The first equation \eqref{e-1} describes a conic, which is smooth
provided $R^2=1+2ED+4E^2\neq 0$. For each point in this conic there are
generically two solutions $x$ of the second equation \eqref{e-2}. It
is useful to change variables by completing the square and setting
$z=(1-A_1^2)x+A_1(A_2+D)$ so that the second equation becomes
\begin{equation}\label{e-2'}
  z^2=A_1^2+(A_2+D)^2-1.
\end{equation}
Physically $z=\sqrt{D+2E}\,L$ is proportional
to the angular momentum $L$.
  
In projective coordinates $A_1=\alpha_1/\alpha_0,A_2+D=\alpha_2/\alpha_0,
z=\zeta/\alpha_0$
the equation are
\begin{align}
  \alpha_1^2+\alpha_2^2-2(D+2E)\alpha_0\alpha_2
  &=(1-(D+2E)D)\alpha_0^2,\label{e-3}
  \\
  \zeta^2&=\alpha_1^2+\alpha_2^2-\alpha_0^2.\label{e-4}
\end{align}
These equations define a family of curves in $\mathbb{CP}^3$ with
projective coordinates $(\alpha_0:\alpha_1:\alpha_2:\zeta)$
over the affine plane with coordinates $D,E$. The fibre $\bar X(D,E)$
contains $X(D,E)$ as an open dense subset. The map
$p\colon (x,A_1,A_2)\mapsto (A_1,A_2)$ to the set of solution of \eqref{e-1}
extends to a two-sheeted covering $p\colon \bar X(D,E)\to C(D,E)$ to the
plane curve defined by \eqref{e-3}. It is the restriction to
$\bar X(D,E)$ of the central projection $\mathbb {CP}^3\smallsetminus
\{c\}\to\mathbb {CP}^2$ (a.k.a.~the line bundle $O(1)$)
with centre $c=(0:0:0:1)$ onto the plane $\zeta=0$.

The ramification locus is the intersection of 
\eqref{e-3} and the quadric $\alpha_1^2+\alpha_2^2-\alpha_0^2=0$ and consists
generically of four points, so that $\bar X(D,E)$ is a generically
a smooth projective curve of genus one. 

If we exclude the line $D+2E=0$, where the two quadrics \eqref{e-3}, \eqref{e-4} coincide,
corresponding to the case where all Kepler trajectories are tangent to
the wall, the number of ramification points is at most 4. They can be
readily computed to be the points ``at infinity'' $P_\pm=(0:1:\pm i:0)$ and
\[
  \left (1:\pm\sqrt{1-\frac{D^2}4}:\frac D2:0\right).
\]
The four ramification points are distinct if and only if $D\neq \pm2$.

It will be useful to compute the action of
$t$ on the  fixed points of $i$ at infinity, which belong to all
curves $\bar X(D,E)$.
\begin{lemma}\label{l-1} Assume that $D+2E\neq 0$, and let
  $P_{\pm}=(0:1:\pm \I:0)\in \bar X(D,E)$.  Then the image
  $Q_\pm=t(P_\pm)$ of $P_\pm$ under $t=j\circ i$ has projective
  coordinates
  \[
    Q_\pm =\left(1:\pm\I\,\left(-\frac D2+\frac1{2(D+2E)}\right):
      \frac D2-\frac1{2(D+2E)}:\pm\I\right).
  \]
\end{lemma}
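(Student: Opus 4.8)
The plan is to reduce the computation to an evaluation of $j$ alone and then to resolve the resulting indeterminacy at infinity. Since $P_\pm=(0:1:\pm\I:0)$ lie on the ramification locus of the double cover $p$, and the ramification points of $p$ are precisely the fixed points of the sheet-exchanging involution $i$ (the two solutions $x$ of \eqref{e-2} coincide there), we have $i(P_\pm)=P_\pm$. Hence $t(P_\pm)=j(i(P_\pm))=j(P_\pm)$, and it suffices to compute $j(P_\pm)$. Note that $P_\pm$ lie on every curve $\bar X(D,E)$, so the formula for $Q_\pm$ should depend on $(D,E)$ only through the coefficients of $j$.

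First I would pin down the affine data of $P_\pm$. Homogenizing the substitution $z=(1-A_1^2)x+A_1(A_2+D)$ via $A_1=\alpha_1/\alpha_0$, $A_2+D=\alpha_2/\alpha_0$, $z=\zeta/\alpha_0$ gives $\zeta\alpha_0=(\alpha_0^2-\alpha_1^2)x+\alpha_1\alpha_2$, whence $x=(\alpha_1\alpha_2-\zeta\alpha_0)/(\alpha_1^2-\alpha_0^2)$. Evaluating at $P_\pm$ yields $x=\pm\I$. This is exactly the source of the difficulty: the denominators $x^2+1$ in the formulas \eqref{e-j} for $j$ vanish at $x=\pm\I$, while simultaneously $A_1=\alpha_1/\alpha_0=\infty$ there. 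Thus $j$ cannot be evaluated naively at $P_\pm$; it must be extended across this point of indeterminacy, using that $j$ extends to an automorphism of $\bar X(D,E)$ by Theorem \ref{t-2}.

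To carry out the extension I would take a limit along a local branch of $\bar X(D,E)$ through $P_\pm$. Because $P_\pm$ is a ramification point of $p$, the coordinate $\alpha_0$ vanishes to second order there, so $\zeta$ is a uniformizer and $\alpha_0=O(\zeta^2)$; solving \eqref{e-3} for $\alpha_2$ (after normalizing $\alpha_1=1$) gives $\alpha_2=(D+2E)\alpha_0\pm\sqrt{R^2\alpha_0^2-1}$ on the two branches, which I would expand in powers of $\zeta$ together with $x=\pm\I+O(\zeta^2)$ and $A_1,A_2=O(\zeta^{-2})$. Substituting these expansions into \eqref{e-j} written over the common denominator $x^2+1$ and letting $\zeta\to0$ then produces $(A_1',A_2')$, and hence $Q_\pm$ in projective coordinates. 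Alternatively one can homogenize $j$ directly to a rational self-map of $\mathbb{CP}^3$, clear the common factor that vanishes at $P_\pm$, and read off the value.

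The main obstacle is precisely this limit: it is an $\infty-\infty$ cancellation in which the leading $\zeta^{-4}$ and $\zeta^{-2}$ contributions to $A_1'$ and $A_2'$ must cancel, so the expansions of $x,A_1,A_2$ have to be pushed to sufficiently high order to extract the finite parts. As a consistency check, since $j$ fixes $x$, the image $Q_\pm$ must again satisfy $x=\pm\I$; one verifies from the claimed coordinates that $(\alpha_1'\alpha_2'-\zeta'\alpha_0')/((\alpha_1')^2-(\alpha_0')^2)=\pm\I$, which both confirms the internal consistency of the stated formula and provides a useful partial check on the algebra.
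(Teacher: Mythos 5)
Your reduction is correct and matches the paper's first step: $P_\pm$ is a ramification point of $p$, hence fixed by the sheet-exchanging involution $i$, so $t(P_\pm)=j(P_\pm)$; and your local analysis at $P_\pm$ ($\zeta$ a uniformizer, $\alpha_0=O(\zeta^2)$, $x=\pm\I+O(\zeta^2)$, $A_1,A_2=O(\zeta^{-2})$) is sound. The gap is that the argument stops exactly where the content of the lemma begins: the lemma asserts a \emph{specific} formula for $Q_\pm$, and you never derive or verify it. What you give is a procedure (``expand to sufficiently high order, substitute into \eqref{e-j}, let $\zeta\to 0$'') whose output is not computed. The cancellation you flag is real and not mild: the numerators of $A_1'$, $A_2'$ are $O(\zeta^{-2})$ while $x^2+1=O(\zeta^2)$, so finiteness of the image forces cancellation through two orders, and extracting the finite part requires pushing the expansions of $x$ and $\alpha_2$ two orders beyond leading. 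Until that computation is actually performed (or the answer is confirmed by some independent check), the stated coordinates of $Q_\pm$ remain unproven; the consistency check that $x(Q_\pm)=\pm\I$ is necessary but far from sufficient, since it does not see the $(D,E)$-dependent entries.

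The paper sidesteps the hard limit entirely by exploiting that $j$ is an involution: it verifies that the claimed $Q_\pm$ lies on $\bar X(D,E)$ (direct substitution into \eqref{e-3}--\eqref{e-4}) and that $j(Q_\pm)=P_\pm$, whence $j(P_\pm)=Q_\pm$. The point $Q_\pm$ also has $x=\pm\I$, so \eqref{e-j} is again singular there, but now only a projective limit is needed: at $x=\pm\I$ the numerator of $A_2'$ is identically $\pm\I$ times the numerator of $A_1'$, so $A_1',A_2'\to\infty$ with $(A_2'+D)/A_1'\to\pm\I$, and the image is $(0:1:\pm\I:0)=P_\pm$; no finite part has to be extracted. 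I would recommend either carrying out your expansion in full or restructuring the argument around involutivity as the paper does.
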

\begin{proof} Since $i(P_\pm)=P_\pm$, $t(P_\pm)=j(P_{\pm})$. A
  simple way to check the claim is to check that $Q_\pm$ belong to
  $\bar X(D,E)$, which is straightforward, and prove that
  $j(Q_\pm)=P_\pm$. By writing $Q_{\pm}=(1:A_1:A_2+D:z)$ we see that
  $A_1=\mp i(A_2+D)$ so the relation between $z$ and $x$ (see the
  discussion leading to \eqref{e-2'}) becomes
  $z=(1-A_1^2)x\pm\I\,A_1^2$.  Thus $z=\pm \I$ corresponds to
  $x=\pm \I$.  Then taking the limit $x\to \pm\I$ in the formula
  \eqref{e-j} for $j$, we see that the ratio $(A_2+D)/A_1$ tends to
  $\pm\I$ so that $j(Q_{\pm})=P_{\pm}$.
\end{proof}
\begin{theorem}\label{t-3}
  Let $D,E\in\mathbb C$ be such that $D+2E\neq0$.
  \begin{enumerate}
  \item[(i)] If $1+2DE+4E^2\neq 0$ and $D^2\neq 4$, then the closure
    $\bar X(D,E)$ of $X(D,E)$ in $\mathbb {CP}^3$ is a smooth
    projective curve of genus one. The birational maps $i$ and $j$
    extend to non-trivial involutive automorphisms of $\bar X(D,E)$,
    both having fixed points. Their composition $t=j\circ i$ is a
    non-trivial element of the elliptic curve.
  \item[(ii)] If $D^2=4$, then $\bar X(D,E)$ is a rational curve with
    one node, which a fixed point for both involutions. Their composition
    is a non-trivial automorphism.
  \item[(iii)] If $1+2DE+4E^2=0$ and $D^2\neq 4$ then $\bar X(D,E)$ has
    two components meeting at two nodes. The involution $i$ preserves
    the components and permutes the nodes, $j$ permutes the components and
    fixes the nodes. 
  \end{enumerate}
\end{theorem}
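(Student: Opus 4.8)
The plan is to read all three cases off the two‑sheeted covering $p\colon\bar X(D,E)\to C(D,E)$, where $C(D,E)$ is the conic \eqref{e-3} and $p$ is the central projection away from $\zeta$. Concretely, $\bar X(D,E)$ is the double cover of $C(D,E)$ defined by $\zeta^2=(\alpha_1^2+\alpha_2^2-\alpha_0^2)|_{C(D,E)}$, and the deck involution $\zeta\mapsto-\zeta$ is exactly $i$: over a fixed $(A_1,A_2)$ the two solutions of \eqref{e-2} correspond to the two values $\pm z$ satisfying \eqref{e-2'}, so $i$ negates $z$ and hence $\zeta$. Writing $s:=D+2E$, the symmetric matrix of \eqref{e-3} has determinant $-R^2$ with $R^2=s^2+1-sD=1+2DE+4E^2$, so $C(D,E)$ is a smooth conic, a copy of $\mathbb{CP}^1$, precisely when $R^2\neq0$. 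The branch locus of $p$ is the zero divisor on $C(D,E)$ of $\alpha_1^2+\alpha_2^2-\alpha_0^2$, namely the four points $P_\pm=(0:1:\pm\I:0)$ and $(1:\pm\sqrt{1-D^2/4}:D/2:0)$, which are distinct if and only if $D^2\neq4$.

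For part (i) I would then invoke Riemann--Hurwitz: a double cover of $\mathbb{CP}^1$ branched at four distinct points has $2g-2=2(2\cdot0-2)+4=0$, so $g=1$ and $\bar X(D,E)$ is smooth of genus one. The involution $i$ is the deck transformation, hence a non-trivial automorphism whose fixed points are the four branch points. The map $j$ is a birational involution of the smooth projective curve $\bar X(D,E)$, so it extends to a biregular involution (a rational map from a smooth curve to a projective variety is a morphism, and a birational morphism of smooth projective curves is an isomorphism); it is non-trivial since it moves $(A_1,A_2)$, and it has fixed points, exhibited in item (b) as the solutions of \eqref{e-1} with $A_2=2E/(1\pm R)$. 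Since both $i,j$ are non-trivial involutions with fixed points on a genus-one curve, the argument recalled before Corollary \ref{c-1} shows $t=j\circ i$ is a translation by an element of the associated elliptic curve. To see it is non-trivial I would use Lemma \ref{l-1}: as $i(P_\pm)=P_\pm$ we get $t(P_\pm)=j(P_\pm)=Q_\pm$, and $Q_\pm$ has $\alpha_0\neq0$ whereas $P_\pm$ lies at infinity, so $t(P_\pm)\neq P_\pm$.

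Parts (ii) and (iii) are the two degenerations of this picture. When $D^2=4$ the two finite branch points collide into the single point $(1:0:D/2:0)$, i.e. $C(D,E)$ becomes tangent to $\{\alpha_1^2+\alpha_2^2-\alpha_0^2=0\}$; the double cover then acquires one node over the tangency, so $\bar X(D,E)$ is rational with one node (geometric genus zero, arithmetic genus one for the $(2,2)$ complete intersection). The node is $x=A_1=0$, $A_2=-D/2$, and substituting these coordinates into \eqref{e-i} and \eqref{e-j} shows directly that both $i$ and $j$ fix it, while non-triviality of $t$ again follows from Lemma \ref{l-1}. When instead $R^2=0$ and $D^2\neq4$, completing the square turns \eqref{e-3} into $\alpha_1^2+(\alpha_2-s\alpha_0)^2=R^2\alpha_0^2$, so the quadric \eqref{e-3} splits into the two conjugate planes $\Pi_\pm\colon\alpha_1\pm\I(\alpha_2-s\alpha_0)=0$. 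Hence $\bar X(D,E)$ is the union of the two conics $\bar X_\pm=\Pi_\pm\cap\mathcal Q$, where $\mathcal Q$ is the smooth quadric \eqref{e-4}; each is a smooth $\mathbb{CP}^1$ (the plane section is non-degenerate because $s\neq0$), and they meet along $\Pi_+\cap\Pi_-\cap\mathcal Q$ in the two nodes $N_\pm=(1:0:s:\pm\sqrt{s^2-1})$, distinct because $D^2\neq4$ forces $s\neq\pm1$. Since $\Pi_\pm$ do not involve $\zeta$, the deck involution $i$ preserves each component and, negating $\zeta$, exchanges $N_+$ and $N_-$; the nodes have $A_1=0$, $A_2=2E$, and plugging this into \eqref{e-j} shows $j$ fixes them.

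The step I expect to be the crux is showing that $j$ \emph{exchanges} the two components in case (iii) rather than preserving them, since a fixed-point count alone does not decide this. Here Lemma \ref{l-1} settles it: one checks $P_+\in\Pi_+$, while the identity $R^2=0$, rewritten as $s+1/s=D$, gives $Q_+\in\Pi_-$ (and $Q_+$ is not a node, as $sD=1$ would force $s=0$). Thus $t(P_+)=Q_+$ lands on the opposite component, and since $i$ preserves components, $j=t\circ i$ must exchange them. The remaining points---that the listed points really are the branch points, that the plane sections and nodes are as claimed, and the explicit substitutions into \eqref{e-i}--\eqref{e-j}---are routine computations.
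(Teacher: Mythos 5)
Your proposal is correct and follows essentially the same route as the paper's own proof: the double cover $p\colon\bar X(D,E)\to C(D,E)$ with its four ramification points and Riemann--Hurwitz for (i), extension of the birational involutions via the standard facts about maps of smooth projective curves, Lemma \ref{l-1} for non-triviality and for showing $j$ swaps the components in (iii), and the splitting of \eqref{e-3} into the two planes $\Pi_\pm$ (the paper's lines $\ell_\pm$ in the $(A_1,A_2)$-plane). The only differences are presentational, e.g.\ you verify that $j$ fixes the nodes by direct substitution into \eqref{e-j} where the paper argues from the reflection symmetry of the corresponding Kepler conic.
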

\begin{proof}
  The claims of non-triviality of the automorphims follow from Lemma
  \ref{l-1}: for $D+2E\neq0$, $j$ and $t$ map the point at infinity
  $P_+$ to a finite point $Q_+$.

  (i) The first claim follows from the fact that any two-sheeted cover
  of a smooth rational curve with four simple ramification points is a
  curve of genus 1. The second claim is a consequence of the fact that
  any birational map between smooth projective curves is an
  isomorphism, see \cite[Chapter I, Proposition 6.8]{Hartshorne_1977}.

  (ii) If $D^2=2$ two of the four ramification points merge at the
  fixed point $(x,A_1,A_2)=(0,0,-1)$ of both $i$ and $j$.

  (iii) Here the base curve given by \eqref{e-1} consists of two lines
  $\ell_\pm$ with equations $A_2-2E=\pm iA_1$ meeting at the node
  $(A_1,A_2)=(0,2E)$. There are two ramification points on each line
  and they are distinct and distinct from the singular point as long
  as $D^2-4$ (the $A_1$-coordinates of the finite ramification points
  is $\pm\sqrt{1-D^2/4}$).  Thus $\bar X(D,E)$ is the union of two
  smooth rational curves $C_{\pm}$, two-sheeted coverings of
  $\ell_{\pm}$. They meet at two nodes, the preimages of the singular
  point $(0,2E)$. The sheets and thus the nodes are interchanged by $i$.
  The nodes represent Kepler conics with foci
  $(0,0)$ and $(0,2)$, and their intersection points with the
  wall. The conics are symmetric with respect to the reflection at the wall.
  Therefore they cut the wall at straight angles and they are
  fixed by $j$. Since $D+2E=-1/2E$, the image of $P_+$, which
  belongs to the component $C_+$, is $(1:A_1:A_2+D:\I)$ with $A_1=\I(-D/2-E)$,
  $A_2=-D/2+E$. Thus $j(P_+)=t(P_+)\in C_-$ and $j$ is an isomorphism
  $C_+\to C_-$.  
\end{proof}  
This implies Theorem \ref{t-2}. In particular in the smooth case
(i) there is
on $\bar X(D,E)$ an abelian differential $\lambda$, unique
up to normalization, $\Lambda$ is the lattice of integrals of $\lambda$
over closed curves and, for any choice of base point
$P_0\in \bar X(D,E)$ the Abel--Jacobi map
$P\mapsto\int_{P_0}^P\lambda$ is a biholomorphic map
$\bar X(D,E)\to \mathbb C/\Lambda$.

Under this map, $t$ becomes the translation by an element
$T\in\mathbb C/\Lambda$. It can be computed by applying $t$ to any
point $P\in \bar X(D,E)$ for instance the point at infinity
$P_+=(0:1:\I:0)$ and taking the difference of the images of
$Q_+=t(P_+)$ and $P_+$:
\[
  T=\int_{P_+}^{Q_+}\lambda.
\]
More abstractly, $\bar X(D,E)$ is the fibre of an algebraic family
$\bar X\to Y=\mathbb A^2\smallsetminus H$ of projective curves of
arithmetic genus one over the complement of the line $H=\{D+2E=0\}$
with two sections $P_+,Q_+=t\circ P_+\colon Y\to
X$. The action of $t$ on the fibres is the action of the element of
the Jacobian defined by the divisor $P_+-Q_+$.

Here is an explicit parametrization of $\bar X(D,E)$ by Jacobi
elliptic functions. Let $R$ be a square root of $R^2=1+2DE+4E^2$.
A rational parametrization of the space of solutions of \eqref{e-1'}
is
\begin{equation}\label{e-5}
  A_1=\frac{2\I R  s}{1-s^2},\quad A_2=2E+R\frac{1+s^2}{1-s^2}.
\end{equation}
and the real points for $D,E$ real correspond to imaginary $s$.

Then the second equation, in the form \eqref{e-2'},
is brought to the standard Legendre form
\[
  y^2=(1-s^2)(1-k^2 s^2),\quad     k^2=\frac{D+4E-2R}{D+4E+2R},
\]
by introducing the variable
\[
  y=C^{-1}z(1-s^2),\quad \text{where $C^2=(D+2E)(D+4E+2R)$}.
\]
In these variables the involution $i$ is $(s,y)\mapsto (s,-y)$ and
a non-zero differential is $\lambda=ds/y$.

The coordinates of $P_+,Q_+$ are 
\[
  s(P_+)=-1,\quad y(P_+)=0,\quad   s(Q_+)  =\frac{D+2E+R}{D+2E-R},
\quad  y(Q_+)=\frac{-4\I (D+2E)R}{C(D+2E-R)^2}.
\]

The lattice $\Lambda$ is spanned by $4K$ and $2 {\I} K'$ where
$K,{\I} K'$ are the complete elliptic integrals of the first kind
\begin{equation}\label{e-K}
  K=\int_{0}^{1}\frac{ds}{\sqrt{(1-s^2)(1-k^2s^2)}},
  \quad {\I} K'=\int_1^{1/k}\frac{ds}{\sqrt{(1-s^2)(1-k^2s^2)}}.
\end{equation}
We then have the parametrization in terms of Jacobi elliptic
functions, see, e.g., \cite[Chapter 4]{Du_Val_1973},
\[
  s=\sn(u,k), \quad y=\frac d{du}\sn(u,k)
  =\cn(u,k)\dn(u,k),
\]
providing an isomorphism $\mathbb C/\Lambda\to \bar X(D,E)$.

Thus we get an explicit uniformization. Instead of the $x$-coordinate
$x$ of the hitting point it is convenient to use the variable $z$
(proportional to the angular momentum),
related to it via \eqref{e-2'}:
\[
 z=    (1-A_1^2)x+A_1(A_2+D).
\]
\begin{theorem}\label{t-4}
  Let $(D,E)$ obey the hypotheses of Theorem \ref{t-2} and choose
  a square root of $R^2=1+2ED+4E^2$. Set
  \[
    k^2=\frac{D+4E-2R}{D+4E+2R},
    \quad s_0=\frac{D+2E+R}{D+2E-R}.
  \]
  Let $\Lambda=\mathbb Z 4K+\mathbb Z 2 {\I} K'$ where
  $K=K(k^2),K'=K'(k^2)$ are the complete elliptic integrals
  \eqref{e-K} with Legendre modulus $k$.  There is a biholomorphic map
  $\varphi\colon\mathbb C/\Lambda\to \bar X(D,E)$ given by
  \[
    A_1=2{\I}R \frac{\sn u}{\cn^2u}, \quad A_2=2E-R+\frac{2R}{\cn^2 u},
    \quad
    z=C \frac{\dn u}{\cn u}.
  \]
  Here $\sn,\cn,\dn$ are the classical Jacobi elliptic function with
  Legendre modulus $k$ and $C^2=(D+2E)(D+4E+2R)$.  Moreover
  $\varphi^{-1}\circ t\circ \varphi$ is the translation $u\mapsto u+T$
  by
  \[
    T=\int_{-1}^{s_0}\frac{ds}{\sqrt{(1-s^2)(1-k^2 s^2)}}\mod\Lambda,
  \]
\end{theorem}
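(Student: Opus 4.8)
The plan is to assemble the statement from the Legendre reduction and the Jacobi parametrization already set up just before the theorem, so the work is mostly a matter of changing coordinates and tracking one integral. First I would produce the explicit formulas for $\varphi$ by substituting the standard parametrization $s=\sn u$, $y=\cn u\,\dn u$ into the rational parametrization \eqref{e-5} of the circle \eqref{e-1'} and into the relation $z=Cy/(1-s^2)$. Using the Jacobi identity $1-\sn^2 u=\cn^2 u$ one gets $A_1=2\I R\,\sn u/\cn^2 u$ directly; writing $(1+\sn^2 u)/\cn^2 u=-1+2/\cn^2 u$ turns the $A_2$ formula of \eqref{e-5} into $A_2=2E-R+2R/\cn^2 u$; and $z=Cy/\cn^2 u=C\,\dn u/\cn u$ follows from $y=\cn u\,\dn u$. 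These are exactly the three formulas claimed.

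Next I would argue that $\varphi$ is the biholomorphism already exhibited, now read off in the coordinates $(A_1,A_2,z)$. The identity $y^2=\cn^2 u\,\dn^2 u=(1-s^2)(1-k^2 s^2)$ confirms that $(s,y)=(\sn u,\cn u\,\dn u)$ lies on the Legendre curve, and since $ds=\cn u\,\dn u\,du=y\,du$ the abelian differential is $\lambda=ds/y=du$, so $u$ is precisely the Abel--Jacobi coordinate; hence $\varphi$ is the stated isomorphism $\mathbb C/\Lambda\to\bar X(D,E)$. The only care needed is at the zeros of $\cn u$, i.e.\ $u\equiv\pm K$, where $A_1$ and $A_2$ blow up: there one passes to the projective coordinates $(\alpha_0:\alpha_1:\alpha_2:\zeta)$ and checks that $\varphi$ extends holomorphically to the points at infinity $P_\pm$. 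The period lattice is then forced, since $\sn u$ has primitive periods $4K$ and $2\I K'$, giving $\Lambda=\mathbb Z\,4K+\mathbb Z\,2\I K'$ in agreement with the span of the integrals of $\lambda$ recorded above.

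For the rotation number I would use that $i$ fixes $P_+$, so $t(P_+)=j\circ i(P_+)=j(P_+)=Q_+$, together with the fact that under $\varphi$ the map $t$ becomes the translation by $T=\int_{P_+}^{Q_+}\lambda$. Because $\lambda=du$, integrating along a path whose $s$-coordinate runs from $s(P_+)=-1$ to $s(Q_+)=s_0$ converts this into $T=\int_{-1}^{s_0}ds/\sqrt{(1-s^2)(1-k^2 s^2)}\bmod\Lambda$, which is the asserted value; concretely it equals $u(Q_+)-u(P_+)$ with $u(P_+)=-K=\sn^{-1}(-1)$.

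The main obstacle is bookkeeping rather than anything conceptual: one must pin down the correct branch of $y=\sqrt{(1-s^2)(1-k^2 s^2)}$ and the homotopy class of the path from $P_+$ to $Q_+$ so that the incomplete elliptic integral represents the intended element $T\in\mathbb C/\Lambda$. All such ambiguities are absorbed modulo $\Lambda$, so once the endpoints $s=-1$, $s=s_0$ and the normalization $\lambda=du$ are fixed, the value of $T$ is determined and the remaining verifications are routine.
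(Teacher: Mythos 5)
Your proposal is correct and follows essentially the same route as the paper, which proves Theorem \ref{t-4} through the preceding discussion: the rational parametrization \eqref{e-5}, the Legendre reduction $y=C^{-1}z(1-s^2)$, the substitution $s=\sn u$, $y=\cn u\,\dn u$ with $\lambda=ds/y=du$, and the computation of $T=\int_{P_+}^{Q_+}\lambda$ from the coordinates $s(P_+)=-1$, $s(Q_+)=s_0$. Your handling of the sign/branch ambiguity matches the paper's Remark \ref{r-1}, which fixes it by requiring the square root at $s=s_0$ to equal $y(Q_+)$.
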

\begin{remark}\label{r-1}
  There is an ambiguity of sign in the definition of the shift $T$ and
  the choice of square root of $C$. The condition fixing the sign of
  $T$ is that the value of the square root in the integrand at $s=s_0$
  is $y(Q_+)$. We will be more precise in the real case below.
\end{remark}

\section{The real locus}\label{s-4}
So far we considered the problem in the complex domain. Here we want
to restrict to the physical region and discuss the quasi-periodicity.
For each real values $D,E$ of the integrals of motion takes place in
the set $X_{\mathbb R}(D,E)$ of solutions of \eqref{e-1},\eqref{e-2}.

From the previous section we know that $X_{\mathbb R}(D,E)$, with
$D,E$ real obeying the hypotheses of Theorem \ref{t-2} has a
compactification $\bar X_{\mathbb R}(D,E)$ which is the set of real
points of a smooth complex projective curve $\bar X(D,E)$ defined over
$\mathbb R$.  The complex conjugation in $\mathbb {CP}^3$
restricts to an antiholomorphic involution $\sigma$ of $\bar X(D,E)$
and $\bar X_{\mathbb R}(D,E)$ is the set of fixed points of $\sigma$.
It can be empty or have 1
or 2 connected components.  There is one connected components if and
only if exactly two ramification points are real.  Under the
uniformization isomorphism $\bar X(D,E)\to\mathbb C/\Lambda$, $\sigma$
is carried to an antiholomorphic involution of $\mathbb C$ preserving
the lattice $\Lambda$, which we can choose to be $u\mapsto -\bar u$.

In our case the ramification points at infinity $(0:1:\pm i:0)$ are
not real, so we have the following two possibilities:
\begin{enumerate}
\item[I.] {$|D|<2$.} Exactly two ramification points are real,
  $\bar X_{\mathbb R}(D,E)$, if non-empty, has one connected
  component.  This is the {\em unipartite case}: the lattice $\Lambda$
  is {\em rhombic} namely it is generated by two periods
  $\omega,-\bar\omega$ which are mapped to each other by the
  antiholomorphic involution. The real locus $\mathcal E(\mathbb R)$
  in $\mathcal E=\mathbb C/\Lambda$ is isomorphic to the circle group
  $\mathbb R/\mathbb Z$ via
  \begin{equation}\label{e-7}
    \theta\mapsto  (\omega-\bar\omega)\theta\mod \Lambda,
    \quad\theta\in\mathbb
    R/\mathbb Z.
  \end{equation}
\item[II.]  {$|D|>2$.} No ramification point lies on the real locus,
  $\bar X_{\mathbb R}(D,E)$, if non- empty, has two connected
  components (the {\em bipartite case}).  The lattice $\Lambda$ is {\em
    rectangular}, namely it is generated by a period $\omega_1$ which
  changes sign under the antiholomorphic involution and a period
  $\omega_2$ which is fixed by it. The real locus
  $\mathcal E(\mathbb R)$ in $\mathbb C/\Lambda$ is isomorphic to the
  group $\mathbb R/\mathbb Z\times\mathbb Z/2\mathbb Z$ via
  \begin{equation}\label{e-8}
    (\theta,\epsilon)\mapsto \omega_2\theta
    +\frac{\omega_1}2\epsilon\mod\Lambda,
    \quad (\theta,\epsilon)\in\mathbb R/\mathbb Z\times\mathbb Z/2\mathbb Z.
  \end{equation}
\end{enumerate}
Thus we have the following result.
\begin{theorem}\label{t-5}
  Let $D,E\in\mathbb R$ obey the hypotheses of Theorem \ref{t-2} and
  assume that $X_{\mathbb R}(D,E)$ is non-empty.  Let
  $\mathcal E\cong\mathbb C/\Lambda$ be the elliptic curve associated
  with $X(D,E)$.  Then
  $t\colon\bar X_{\mathbb R}(D,E)\to \bar X_{\mathbb R}(D,E)$ is the
  action of an element of the group $\mathcal E(\mathbb R)$ of real
  points, which is isomorphic to the circle group $S^1$ if $|D|<2$ and to
  $S^1\times \mathbb Z/2\mathbb Z$ if $|D|>2$.
\end{theorem}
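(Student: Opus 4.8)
The plan is to assemble the statement from Corollary~\ref{c-1}, from the reality of the two involutions, and from the classical structure theory of real elliptic curves; the genuine geometric work has already been done in the preceding construction, so what remains is to check that the translation found in the complex setting restricts to the real locus and to read off the group of real points. By Corollary~\ref{c-1} we have a biholomorphism $\varphi\colon\mathbb C/\Lambda\to\bar X(D,E)$ under which $t$ is the translation $u\mapsto u+T$ by an element $T\in\mathcal E=\mathbb C/\Lambda$, so the task reduces to two points: that $T$ is a \emph{real} point, and that $\mathcal E(\mathbb R)$ has the asserted group structure.

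First I would show that $t$ preserves $\bar X_{\mathbb R}(D,E)$ and that $T\in\mathcal E(\mathbb R)$. Since $D,E$ are real, the formulae \eqref{e-i} and \eqref{e-j} for $i$ and $j$ have real coefficients, so both involutions commute with the complex conjugation $\sigma$ of $\mathbb{CP}^3$; hence $t=j\circ i$ commutes with $\sigma$ and carries the fixed-point set $\bar X_{\mathbb R}(D,E)$ of $\sigma$ into itself. Normalizing the abelian differential $\lambda$ to be real, the Abel--Jacobi map intertwines $\sigma$ with the antiholomorphic involution $\sigma_0\colon u\mapsto-\bar u$ of $\mathbb C/\Lambda$, for a suitable real base point. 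Then $\varphi^{-1}t\varphi$ commutes with $\sigma_0$, and a direct check shows that a translation $u\mapsto u+T$ commutes with $u\mapsto-\bar u$ if and only if $T=-\bar T$, which is precisely the condition $T\in\mathcal E(\mathbb R)$. Thus $t$ is the action of a real point of $\mathcal E$.

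Next I would identify $\mathcal E(\mathbb R)$ as a Lie group by counting real ramification points. Because $\sigma$ is carried to $u\mapsto-\bar u$, its fixed locus $\mathcal E(\mathbb R)$ is a compact one-dimensional real Lie subgroup of the torus $\mathbb C/\Lambda$, hence a disjoint union of one or two circles; by the classical theory of real elliptic curves (\cite{Du_Val_1973}, and the discussion preceding the statement) it is connected exactly when precisely two of the four ramification points of the double cover $p\colon\bar X(D,E)\to C(D,E)$ are real. These four points were already computed: the points at infinity $P_\pm=(0:1:\pm\I:0)$ are never real, while $\left(1:\pm\sqrt{1-D^2/4}:D/2:0\right)$ are real if and only if $|D|<2$. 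Hence for $|D|<2$ exactly two ramification points are real, $\Lambda$ is rhombic, and \eqref{e-7} gives $\mathcal E(\mathbb R)\cong\mathbb R/\mathbb Z\cong S^1$; for $|D|>2$ none are real, so, $X_{\mathbb R}(D,E)$ being assumed non-empty, the real locus has two components, $\Lambda$ is rectangular, and \eqref{e-8} gives $\mathcal E(\mathbb R)\cong\mathbb R/\mathbb Z\times\mathbb Z/2\mathbb Z\cong S^1\times\mathbb Z/2\mathbb Z$. Combined with the previous paragraph, this is the assertion of the theorem.

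The main obstacle is the step linking the parity of the real ramification count to the real lattice type and hence to the number of connected components; this is exactly the content of the Abel--Jacobi/Du Val classification, which I would cite rather than reprove. To make the dichotomy concrete and to pin down the real normalization, I would use the explicit parametrization of Theorem~\ref{t-4}, in which $i$ is $(s,y)\mapsto(s,-y)$ and $\lambda=ds/y$ is manifestly real, so that the antiholomorphic involution is visibly $u\mapsto-\bar u$ and the generators of $\Lambda$ can be exhibited directly from the elliptic integrals \eqref{e-K}: a rhombic pair $\omega,-\bar\omega$ when $|D|<2$ and a rectangular pair $\omega_1,\omega_2$ (with $\omega_1$ imaginary, $\omega_2$ real) when $|D|>2$. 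The non-emptiness hypothesis is used only to exclude the empty real locus in the bipartite case, ensuring that $\mathcal E(\mathbb R)$ is genuinely the group acting on $\bar X_{\mathbb R}(D,E)$.
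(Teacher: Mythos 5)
Your proposal is correct and follows essentially the same route as the paper, which proves Theorem~\ref{t-5} in the discussion immediately preceding it: the real locus is the fixed set of the antiholomorphic involution $\sigma$ carried to $u\mapsto-\bar u$, the component count is read off from the number of real ramification points (the two at infinity are never real, the finite ones are real iff $|D|<2$), and the rhombic/rectangular dichotomy from \eqref{e-7}--\eqref{e-8} gives $S^1$ versus $S^1\times\mathbb Z/2\mathbb Z$; your explicit check that $t$ commutes with $\sigma$ and hence that $T$ is a real point is only implicit in the paper. One cosmetic slip: in the paper's convention for \eqref{e-8} (and Lemma~\ref{l-2}) the period $\omega_1=4K$ is real and $\omega_2=2\I K'$ is imaginary, the opposite of your parenthetical.
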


We next examine the condition for the real locus to be non-empty,
give an explicit parametrization of $X_{\mathbb R}(D,E)$ and identify
the element of $\mathcal E(\mathbb R)$ by which $t$ acts.

We first notice that we have a bijection
$X_{\mathbb R}(D,E)\to X_{\mathbb R}(-D,-E)$ sending $(A_1,A_2,x)$ to
$(-A_1,-A_2,x)$. This bijection commutes with the involutions and
changes the sign of $L^2=D+2A_2$. We can assume that $D+2E>0$, which
is the physical region where the angular momentum is real. Indeed it
follows from \eqref{e-1'} and the inequality $A_1^2+(A_2+D)^2\geq 1$,
guaranteeing the existence of real solutions $x$ of \eqref{e-2} that
$(D+2E)(D+2A_2)\geq 0$ and $L^2=D+2A_2$ is non-negative on
$X_{\mathbb R}(D,E)$ only if $D+2E>0$.

A necessary condition for $X_{\mathbb R}(D,E)$ to be non-empty
is that $1+2ED+4E^2\geq0$, see \eqref{e-1'}. It is then convenient
to introduce the radius as the non-negative square root
\[
  R=\sqrt{1+2ED+4E^2}.
\]
\begin{lemma}\label{l-3} Assume $D,E\in\mathbb R$ obey $R>0$, $D+2E>0$.
  Then $X_{\mathbb R}(D,E)$ is
  non-empty if and only if $D+4E+2R>0$.
\end{lemma}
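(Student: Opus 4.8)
The plan is to decouple the two defining equations of $X_{\mathbb R}(D,E)$ and to ask, for a real point $(A_1,A_2)$ of the conic \eqref{e-1'}, when the second equation \eqref{e-2} admits a real solution $x$. Since $R>0$ the conic \eqref{e-1'} is a genuine circle of radius $R$ centred at $(0,2E)$ in the real $(A_1,A_2)$-plane, so its real locus is certainly non-empty; the whole question is therefore whether some point of this circle lifts to a real value of $x$.

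First I would analyse the lifting condition. Viewing \eqref{e-2} as a quadratic in $x$ whenever $A_1^2\ne 1$, a real root exists precisely when its discriminant $4(A_1^2+(A_2+D)^2-1)$ is non-negative; equivalently, using the substitution $z=(1-A_1^2)x+A_1(A_2+D)$ that brings \eqref{e-2} to the form \eqref{e-2'}, a real $x$ exists exactly when $z^2=A_1^2+(A_2+D)^2-1\ge 0$. (The finitely many points with $A_1^2=1$, where \eqref{e-2} degenerates to a linear equation, do not affect the existence question.) The key simplification is to feed the circle equation \eqref{e-1} back into this expression: eliminating $A_1^2$ by \eqref{e-1} gives the identity $A_1^2+(A_2+D)^2-1=(D+2E)(D+2A_2)$, which is just the physical statement $z^2=(D+2E)L^2$. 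Because $D+2E>0$ by hypothesis, the lifting condition collapses to the single inequality $A_2\ge -D/2$.

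Thus $X_{\mathbb R}(D,E)$ is non-empty if and only if the circle \eqref{e-1'} meets the closed half-plane $\{A_2\ge -D/2\}$. This is now an elementary extremal problem: the largest value of $A_2$ attained on the circle is $2E+R$, reached at the top point $(A_1,A_2)=(0,2E+R)$. Hence a suitable point exists exactly when $2E+R\ge -D/2$, that is, when $D+4E+2R\ge 0$, which up to the endpoint is the asserted condition.

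The one delicate point, which I expect to be the main obstacle, is the strictness of the inequality at the boundary $D+4E+2R=0$. Here I would substitute $R^2=1+2DE+4E^2$ into $D+4E+2R=0$; a short computation shows this forces $D^2=4$, the degenerate value at which $\bar X(D,E)$ acquires a node (Theorem \ref{t-3}(ii)). Away from this locus $D+4E+2R$ never vanishes, so the non-strict condition $\ge 0$ coincides with the strict condition $>0$ in the statement. This matches the geometry: at $D+4E+2R=0$ the circle is tangent to the line $A_2=-D/2$, the unique lift has $L=0$, and the resulting point is the degenerate double-line configuration, so the strict inequality is exactly what isolates the non-degenerate real level sets.
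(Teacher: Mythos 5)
Your proposal is correct and follows essentially the same route as the paper: reduce non-emptiness to the existence of a point on the circle \eqref{e-1'} where the discriminant $A_1^2+(A_2+D)^2-1$ is non-negative, use \eqref{e-1} and $D+2E>0$ to turn this into $A_2\ge -D/2$, compare with the maximum $2E+R$ of $A_2$ on the circle, and rule out equality in $D+4E+2R\ge 0$ by squaring to get the excluded value $D^2=4$. The only difference is that you additionally address the degenerate points $A_1^2=1$, which the paper passes over silently.
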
  
\begin{proof}
  The configuration space is defined by the two equations \eqref{e-1},
  \eqref{e-2}.
  The second equation has real solutions for $x$ if and only if the
  discriminant $A_1^2+(A_2+D)^2-1$ is non-negative. Thus
  $X_{\mathbb R}(D,E)$ is non-empty if and only if there exists
  $(A_1,A_2)\in\mathbb R^2$ such that
  \begin{align*}
    A_1^2+(A_2-2E)^2&=R^2,
    \\
    A_1^2+(A_2+D)^2&\geq1.    
  \end{align*}
  Taking the difference we can replace the inequality by
  $(D+2E)(D+2A_2)\geq0$ or $A_2\geq -D/2$. Given $A_2$ obeying this
  inequality, we find $A_1$ if and only if $(A_2-2E)^2\leq R^2$.
  These two inequalities for $A_2$ can be simultaneously satisfied if
  and only if $D+4E+2R\geq 0$. We still have to show that this
  inequality cannot be an equality.  If $D+4E=-2R$ then taking squares
  gives
  $D^2=4$, which is excluded by the assumption of Theorem \ref{t-4}.
\end{proof}
In terms of the Legendre coordinates the real points correspond to
imaginary $s$ and real $y$. The involution is $s\mapsto -\bar s$,
$y\mapsto \bar y$. Thus $u=\int ds/y$ is mapped to $-\bar u$ under
the antiholomorphic involution. The periods $\omega,\omega_1,\omega_2$
can be expressed in terms of complete elliptic integrals as follows,
see Fig.~\ref{f-3}.
 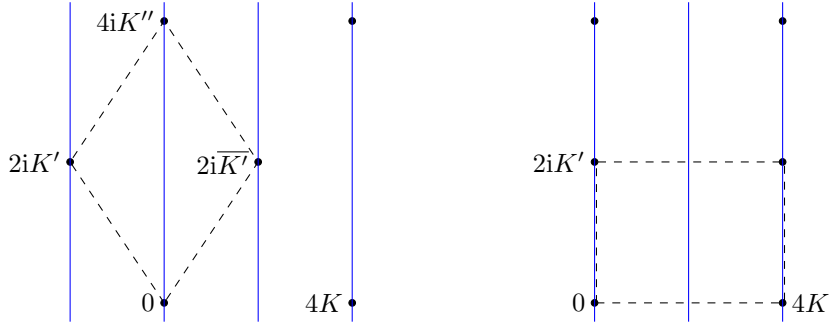
\begin{figure}
 \begin{tikzpicture}[scale=2.5]
  \filldraw (0,0) circle (.5pt) node [below,left,black] {$0$};
  \filldraw (1,0) circle (.5pt) node [below,left,black] {$4K$};
  \filldraw (.5,.75) circle (.5pt) node [below,left,black] {$2\I \overline{K'}$};
  \filldraw (0,1.5) circle (.5pt) node [below,left,black] {$4\I K''$};
  \filldraw (-.5,.75) circle (.5pt) node [below,left,black] {$2\I {K'}$};
  \filldraw (1,1.5) circle (.5pt);
  \draw [dashed] (0,0) -- (-.5,.75) -- (0,1.5) -- (.5,.75) -- (0,0);
  \draw [blue] (0,-0.1) -- (0,1.6);
  \draw [blue] (-0.5,-0.1) -- (-0.5,1.6);
  \draw [blue] (0.5,-0.1) -- (0.5,1.6);
  \draw [blue] (1,-0.1) -- (1,1.6);
\end{tikzpicture}
\hspace{2cm}
\begin{tikzpicture}[scale=2.5]
  \filldraw (0,0) circle (.5pt) node [below,left,black] {$0$};
  \filldraw (1,0) circle (.5pt) node [below,right,black] {$4K$};
  \filldraw (0,.75) circle (.5pt) node [below,left,black] {$2\I K'$};
  \filldraw (0,1.5) circle (.5pt);
  \filldraw (1,.75) circle (.5pt);
  \filldraw (1,1.5) circle (.5pt);
  \draw [blue] (0,-0.1) -- (0,1.6);
  \draw [blue] (0.5,-0.1) -- (0.5,1.6);
  \draw [blue] (1,-0.1) -- (1,1.6);
  \draw [dashed] (0.01,0) -- (1.01,0) -- (1.01,.75) -- (0.01,.75) -- (0.01,0);
\end{tikzpicture}
\caption{The lattice $\Lambda$ of periods in the unipartite case
  (left) and the bipartite case (right). The vertical lines project to
  the real locus on $\mathbb C/\Lambda$. The dashed line indicate a
  fundamental domain}\label{f-3}
\end{figure}

\begin{lemma}\label{l-2} Let $K$, $K'$ be defined for $0<k<1$
  to be real and positive. Extend the definition to the case of $k$ in
  the upper half plane by analytic continuation.
  \begin{enumerate}
  \item[I.] If $|D|<2$ then $k^2<0$. Let $k=\I \ell$ with $\ell>0$.
    Then $K$ is real and $2\I K'=-2K+2\I K''$ where
    \[
      K''=\int_{0}^{1/\ell}\frac{dv}{\sqrt{(1+v^2)(1-\ell^2v^2)}} \in
      \mathbb R_{>0}.
    \]
    Thus $\Lambda$ is spanned by $\omega=2K+2\I K''$,
    $\bar\omega=-2K+2\I K''$.
  \item[II.] If $|D|>2$ then $0<k^2<1$ and $\Lambda$ is spanned by
    $\omega_1=4K$ and $\omega_2=2\I K'$.
   \end{enumerate}
 \end{lemma}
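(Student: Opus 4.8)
The plan is to first pin down the sign of $k^2$ from the explicit formula $k^2=(D+4E-2R)/(D+4E+2R)$ of Theorem~\ref{t-4}, and only then read off the shape of the lattice $\Lambda=\Z\,4K+\Z\,2\I K'$. By Lemma~\ref{l-3} the non-emptiness hypothesis gives $D+4E+2R>0$, so the denominator is positive and the sign of $k^2$ is that of the numerator $D+4E-2R$. The one elementary computation needed is the identity
\[
  (D+4E)^2-(2R)^2=D^2-4,
\]
which is immediate from $R^2=1+2DE+4E^2$. Since $D+4E>-2R$, this identity forces $D+4E<2R$ (hence $k^2<0$) when $|D|<2$, and $D+4E>2R>0$ (hence $k^2>0$) when $|D|>2$; in the latter case the numerator falls short of the positive denominator by $4R$, so $0<k^2<1$. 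As $D^2\neq2$ by hypothesis, the two cases are exhaustive.

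The bipartite case~II is then immediate from the classical theory: for $0<k^2<1$ the complete integrals $K,K'$ of \eqref{e-K} are real and positive, and $\Lambda=\Z\,4K+\Z\,2\I K'$ is rectangular. Under the antiholomorphic involution $u\mapsto-\bar u$ the real period $\omega_1=4K$ changes sign while the imaginary period $\omega_2=2\I K'$ is fixed, exactly as claimed.

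The unipartite case~I is where the work lies. Writing $k=\I\ell$ with $\ell>0$, the integral $K=\int_0^1 ds/\sqrt{(1-s^2)(1+\ell^2 s^2)}$ is manifestly real and positive and requires no continuation. The substance of the statement is the value of the \emph{continued} second period: I claim $2\I K'=-2K+2\I K''$, with $K''$ the real positive integral displayed in the Lemma. I would establish this by following $2\I K'=\oint_B ds/y$ as the modulus moves from $(0,1)$ to $\I\ell$ through the upper half-plane. The branch point $1/k$ then leaves the positive real axis and drags the $B$-cycle with it; in the limit that cycle can be deformed onto a path running along the real $s$-axis from $1$ to $0$ (contributing the real part $-2K$) together with the segment of the imaginary axis between the branch points $\pm\I/\ell$, where the substitution $s=\I v$ turns $ds/y$ into $\I\,dv/\sqrt{(1+v^2)(1-\ell^2 v^2)}$ and produces the term $2\I K''$. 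Equivalently one may invoke the classical imaginary-modulus transformation of the complete elliptic integrals. The main obstacle is precisely this bookkeeping: tracking the branch of $y=\sqrt{(1-s^2)(1-k^2s^2)}$ and the homotopy class of the $B$-cycle along the continuation, so as to fix both the real part $-2K$ and the sign of the imaginary part $+2\I K''$. I would pin these down by continuity, checking the outcome against the period lattice of the limiting real curve $y^2=(1-s^2)(1+\ell^2s^2)$.

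Finally I would assemble the lattice. Granting $2\I K'=-2K+2\I K''$ and setting $\omega=2K+2\I K''$, one has $4K=\omega-2\I K'$, so $\Lambda=\Z\,4K+\Z\,2\I K'=\Z\,\omega+\Z\,2\I K'$. The generators $\omega=2K+2\I K''$ and $2\I K'=-2K+2\I K''$ are interchanged by $u\mapsto-\bar u$, so $\Lambda$ is rhombic, as required in case~I.
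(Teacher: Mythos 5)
Your proposal is correct and follows essentially the same route as the paper: sign of $k^2$ via positivity of the denominator (Lemma~\ref{l-3}) and the identity $(D+4E-2R)(D+4E+2R)=D^2-4$, then the deformation of the integration path for $\I K'$ from $1$ to $-\I/\ell$ through $0$ and along the imaginary axis to get $2\I K'=-2K+2\I K''$; your explicit change of lattice generators at the end is a welcome extra detail the paper leaves implicit. Only a cosmetic slip: ``$D^2\neq 2$'' should read $D^2\neq 4$.
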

 \begin{proof}
   The denominator of $k^2$ is positive by Lemma \ref{l-3}.  With the
   identity $(D+4E+2R)(D+4E-2R)=D^2-4$ we see that the numerator is
   positive if and only if $D^4>4$. In this case the numerator is
   smaller than the denominator since $R>0$.  Thus $k^2<0$ if $D^2<4$
   and $0<k^2<1$ if $D^2>4$.  Let $|D|<2$. Then $K$ is positive and
   $\I K'$ is given by analytic continuation for $k=\I\ell$ in the
   upper half-plane. The integration path from $1$ to $1/k=-\I/\ell$
   can be deformed to a path going from $1$ to $0$ and then continuing
   to $-i/\ell$ along the imaginary axis. Thus
   $\I K'=-K+\I\int_0^{1/\ell}dv/\sqrt{(1+v^2)(1-\ell^2v^2)}$.
   In the case II, no analytic continuation is needed.
 \end{proof}
\begin{figure}
\begin{tikzpicture}[scale=.7]
  \filldraw[fill=gray!20,draw=gray!20,domain=-1:-0.25,variable=\a]
  plot({\a,-1/\a-\a})-- (7,4.25)--(7,-4)--(4,-4)--(-2,2)--(-1,2);

  \draw[blue] (-2,2) -- (7,2) node [right] {$D=2$};

  \draw[blue] (2,-2) -- (7,-2) node [right] {$D=-2$};

  \draw [->] (0,-4) -- (0,4.5) node[right] {$D$};
  \draw [->] (-2.1,0) -- (5,0) node[below] {$E$};

  \draw (6,3.5) node {II${}_+$};
  \draw (6,0) node {I};
  \draw (6,-3) node {II${}_-$};
  \foreach \x in {-2, 2, 4}   \draw (\x ,.1) -- (\x ,-.1);
  \draw (2,0) node [below] {$1$};
  
  \foreach \y in {-3, -2, -1,1,2,3,4}   \draw (.1,\y) -- (-.1,\y);

\end{tikzpicture}
\caption{The physical domain of parameters, with the three
  regions delimited by the singular locus $D=\pm 2$. In the region I
  the real locus of the elliptic curve
  is isomorphic to $S^1$. In the regions II${}_\pm$
  it is isomorphic to $S^1\times\mathbb Z/2
  \mathbb Z$ and the dynamics is given by the translation of an
  element in the connected component of the identity  (case II${}_-$) or
  the other component (case II${}_+$).}\label{f-4}
\end{figure}
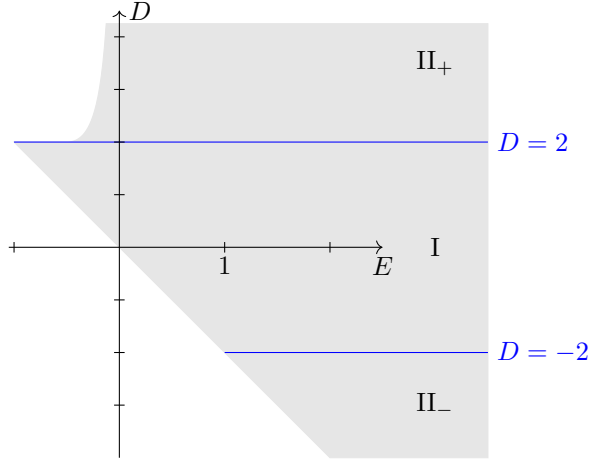

\begin{theorem}\label{t-6} Assume
  $(D,E)\in\mathbb R^2$ obeys the hypotheses
  of Theorem \ref{t-2} and the real angular momentum condition $D+2E>0$.
  Then $X_{\mathbb R}(D,E)$ is non-empty in the following three cases
  (see Fig.~\ref{f-4}). 
  \begin{enumerate}
  \item[I\quad] Let $-2<D<2$, $D+2E>0$.  In this case $k^2<0$,
    $|s_0|>1$. Write $k=\I \ell$.  Then there is a
    diffeomorphism
    $\varphi\colon \mathbb R/\mathbb Z\to X_{\mathbb R}(D,E)$ such
    that $\varphi^{-1}\circ t\circ \varphi(\theta)=\theta+\alpha$ is
    the shift by
    \[
      \alpha=\alpha(D,E)=-
      \frac{1}{4K''}\int_{-1}^{1/s_0}\frac{ds}{\sqrt{(1-s^2)(\ell^2+s^2)}}.
    \]
  \item[II${}_+$] Let $D>2$, $E>-1/2$, $1+2ED+4E^2>0$.  In this case
    $0<k^2<1$ and we have $1<s_0<1/k$ where $k$ is the positive square
    root.  Then there is a diffeomorphism
    $\varphi\colon \mathbb R/\mathbb Z\times\mathbb Z/2\mathbb Z\to
    X_{\mathbb R}(D,E)$ such that
    $\varphi^{-1}\circ t\circ
    \varphi(\theta,\epsilon)=(\theta+\alpha,\epsilon+\bar 1)$ is the
    composition of the shift by
    \[
      \alpha=\alpha(D,E)=\frac{1}{2K'}\int_{1}^{s_0}
      \frac{ds}{\sqrt{(s^2-1)(1-k^2s^2)}}
    \]
    and the generator $\bar 1$ of the subgroup $\mathbb Z/2\mathbb Z$.
  \item[II${}_-$] Let $D<-2$, $D+2E>0$. In this case $0<k<1$ and
    $-1/k<s_0<-1$.  There is a diffeomorphism
    $\varphi\colon \mathbb R/\mathbb Z\times\mathbb Z/2\mathbb Z\to
    X_{\mathbb R}(D,E)$ such that
    $\varphi^{-1}\circ t\circ
    \varphi(\theta,\epsilon)=(\theta+\alpha,\epsilon)$ is the shift by
    \[
      \alpha=\alpha(D,E)=-\frac{1}{2K'}\int_{s_0}^{-1}
      \frac{ds}{\sqrt{(s^2-1)(1-k^2s^2)}}.
    \]
  \end{enumerate}
  In these formulas we take the positive square root in the integrands.
  In all cases $C^2=(D+2E)(D+4E+2R)>0$.
  The map $\varphi$ is the parametrization of Theorem \ref{t-4} with
  $C>0$ and 
  with $u=4\I K''\theta$ in case $I$ and $u=2\I K'\theta+\epsilon 2K$
  in case II${}_\pm$.
  Moreover for any $E$, the derivative  $\partial\alpha(D,E)/\partial D$
  does not vanish on a dense open set.
\end{theorem}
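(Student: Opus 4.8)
The plan is to reduce the assertion to a \emph{non-constancy} statement and then establish non-constancy by a degeneration argument at the boundary of each region. Fix $E$ and regard $\alpha(\cdot,E)$ as a function of $D$ on the (at most three) open intervals making up its domain. On each such interval the quantities $R$, $k^2$ and $s_0$ are real-analytic in $D$, and the complete integrals $K',K''$ together with the incomplete integrals in the formulas for $\alpha$ are real-analytic as long as the modulus stays in the open range ($0<k^2<1$, resp.\ $\ell>0$) and the endpoint $s_0$ does not collide with the branch points $\pm1,\pm 1/k$. These conditions hold throughout the interior of each region by the hypotheses of Theorem \ref{t-2} and Lemma \ref{l-2}, so $\alpha(\cdot,E)$ is real-analytic there. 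Consequently $\partial_D\alpha$ is real-analytic, and by the identity theorem its zero set is either the whole interval (precisely when $\alpha$ is constant) or a discrete, hence nowhere dense, set. Thus it suffices to prove that $\alpha(\cdot,E)$ is non-constant on each interval; the non-vanishing of $\partial_D\alpha$ on a dense open set follows at once.

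To prove non-constancy I would compute the limit of $\alpha$ at a boundary of the interval at which the elliptic curve degenerates. Take region II${}_+$ as the model case. As $D\to 2^+$ one has $D^2-4\to 0$, and since $(D+4E+2R)(D+4E-2R)=D^2-4$ with $D+4E+2R>0$ by Lemma \ref{l-3}, the numerator of $k^2$ tends to $0$; hence $k\to 0$ and the complete integral $K'(k)$ diverges. Meanwhile $s_0\to 3+4E$ stays finite with $3+4E>1$, and the incomplete integral $\int_{1}^{s_0} ds/\sqrt{(s^2-1)(1-k^2s^2)}$ converges to the finite positive value $\int_{1}^{3+4E} ds/\sqrt{s^2-1}$. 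Therefore $\alpha\to 0$ as $D\to 2^+$. On the other hand, throughout the interior we have $1<s_0<1/k$ with $s_0\neq 1$ (as $s_0=1$ would force $R=0$, excluded), so the incomplete integral is strictly positive and $0<\alpha<1/2$. Since $\alpha$ is strictly positive in the interior but tends to $0$ at the boundary, it is non-constant.

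The same mechanism handles the remaining cases. In region II${}_-$ one finds $k\to 0$ as $D\to -2^-$, so $K'$ diverges while the incomplete integral stays finite, forcing $\alpha\to 0$; and the interior values are again strictly nonzero because $s_0\neq -1$. In region I the convenient boundary is $D+2E\to 0^+$, the excluded locus on which $t$ is the identity: there $R\to 1$ and $s_0\to -1$, so the integration interval $(-1,1/s_0)$ collapses, the numerator tends to $0$ while $4K''$ stays finite, and again $\alpha\to 0$ with nonzero interior values (since $s_0=-1$ forces $D+2E=0$). When this boundary is not available for the given $E$, namely when $E\geq 1$ so that region I is the full interval $-2<D<2$, one uses instead the endpoint $D\to -2^+$, where $k=\I\ell$ with $\ell\to 0$ and $K''\to\infty$ while the numerator stays finite, once more giving $\alpha\to 0$. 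In every case $\alpha(\cdot,E)$ is non-constant on each interval of its domain, which completes the argument.

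The main obstacle is the degeneration analysis of the \emph{ratio} of elliptic integrals: at each chosen boundary one must check that exactly one of numerator and denominator tends to $0$ or $\infty$ while the other stays finite and nonzero, which requires tracking the (logarithmic) blow-up of the complete integrals $K',K''$ as the modulus degenerates against the behavior of the incomplete integral, and one must confirm that for every fixed $E$ and every realized region such a boundary is genuinely approached within the $D$-domain. The real-analyticity reduction in the first step is by contrast routine.
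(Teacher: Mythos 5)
Your proposal proves only the final sentence of the theorem. The statement also asserts: (a) the precise characterization of when $X_{\mathbb R}(D,E)$ is non-empty, which in the paper follows from Lemma \ref{l-3} (the condition $D+4E+2R>0$) together with an elementary analysis showing this holds automatically for $D<2$ (given $D+2E>0$) and forces $E>-1/2$ for $D>2$; (b) the location of $k^2$ and $s_0$ in each region; (c) that the parametrization of Theorem \ref{t-4}, restricted to $u=4\I K''\theta$ resp.\ $u=2\I K'\theta+2K\epsilon$, is a diffeomorphism onto the real locus; and above all (d) the identification of the translation $T$ as a real shift with the stated sign and, in the bipartite cases, the determination of which connected component of $S^1\times\mathbb Z/2\mathbb Z$ it lies in. Point (d) is where the paper does the real work: it deforms the integration path for $T=\int_{P_+}^{Q_+}\lambda$, fixes the sign of the square root by the condition that its value at $s_0$ equals $y(Q_+)$ (negative imaginary, cf.\ Remark \ref{r-1}), and extracts the extra $2K$ contribution in case II${}_+$ (whence $\epsilon\mapsto\epsilon+\bar 1$) versus its absence in case II${}_-$. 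None of this appears in your write-up, so as it stands you have proved one clause of the theorem, not the theorem.

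For the clause you do treat, your argument is correct and close in spirit to the paper's: the paper likewise reduces to non-constancy via real-analyticity and establishes non-constancy by a boundary degeneration, but it uses the single limit $D\to -2E$ (where $s_0\to -1$, the incomplete integral collapses, and $k^2\to(E-1)/(E+1)$ keeps $K'$ and $K''$ finite and non-zero). Your region-adapted choice of boundary is actually an improvement in one respect: in region II${}_+$ the hypothesis $E>-1/2$ gives $-2E<1<2<D$, so the locus $D=-2E$ is not approachable from within that region and the paper's limit does not literally apply there; your substitute limit $D\to 2^+$ (where $D+4E-2R\to 0$, hence $k\to 0$ and $K'\to\infty$ while $s_0\to 3+4E$ keeps the incomplete integral finite and positive) fills that in correctly, as do your limits for the other regions. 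If you supply the missing items (a)--(d), the combined argument would constitute a complete proof.
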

\begin{proof}
  By Lemma \ref{l-3}, $X_{\mathbb R}(D,E)$ is non-empty if and only if
  \begin{equation}\label{e-6}
    D+4E+2R>0.
  \end{equation}
  This condition is empty if $E\geq 0$ since the left-hand side is the
  sum of $D+2E>0$, $2E\geq0$, $2R>0$.
  So let $E<0$.
  Eliminating $D$ yields the condition
  \[
    \frac{(R+2E+1)(R+2E-1)}{2E}\geq0.
  \]
  Since $R^2=1+(D+2E)2E<1$, the second factor in the numerator is negative,
  so the condition reduces to $R+2E+1\geq0$. This condition is empty
  if $E\geq-1/2$. If $E<-1/2$, it can be written as $R^2> (2E+1)^2$,
  which is equivalent to $2ED > 4E$, i.e.,$D<2$.
  We conclude
  that \eqref{e-6} is automatically satisfied if $D<2$ (under the assumption
  that $D+2E>0$), and requires $E>-1/2$ if $D>2$. In all cases,
  $C^2=(D+2E)(D+4E+2R)>0$.

  Similar elementary considerations lead to the following inequalities
  for $s_0$.
  \begin{enumerate}
  \item[I.] Let $-2<D<2$, $E>-D/2$. In this case $k^2<0$,
    $|s_0|>1$. Write $k=\I \ell$. The automorphism $t$ acts on
    $\mathbb C/\Lambda$ by translation by the elliptic integral $T$ of
    Theorem \ref{t-4}. Since we take $u=4\I K''\theta$ the shift
    of $\theta$ is $T/(4\I K'')$.
    The only subtlety is to fix the sign.
    The sign of the square root in the integrand is determined by
    the condition that its value at $s=s_0$ is $y(Q_+)$ which is
    negative imaginary (see Remark \ref{r-1}).  Thus we can write $T$
    as
    \[
      T=\I \int_{-1}^{s_0}\frac{ds}{\sqrt{(s^2-1)(1+\ell^2s^2)}}.
    \]
    where we integrate over a path connecting $-1$ and $s_0$ along the
    negative real axis (and going through the point at infinity if
    $s_0$ is positive). The square root is taken to be positive.  A
    more convenient expression is obtained by the variable
    substitution $s\mapsto 1/s$ since $1/s_0$ is then in the interval
    $(-1,1)$ and the integral is over a finite interval.
\item[II${}_+$.] Let $D>2$, $1+2 DE+4E^2>0$. In this case
  we can take $0<k<1$ and $1<s_0<1/k$ The integrand in the
  elliptic integral $T$ is real in the interval $(-1,1)$ and imaginary
  in the interval $[1,s_0]$. The integral over $(-1,1)$ is $2K$ (the
  sign does not matter here since $-2K\equiv 2K\mod\Lambda$) and
  the sign of the integral from $1$ to $s_0$ is fixed as above by
  the condition that the value of the square root at the end point $s_0$
  is negative imaginary. We get
  \[
    T=2K+\I \int_{1}^{s_0}\frac{ds}{\sqrt{(s^2-1)(1-k^2 s^2)}}.
  \]
\item[II${}_-$.] Let $D<-2$, $E>-D/2$. In this case $0<k<1$ and
  $-1/k<s_0<-1$. This case is treated similarly to the previous one.
\end{enumerate}
Finally, the fact that $\alpha(D,E)$ has non-zero derivative follows
by looking at the limit $D\to -2E$.  In this limit $s_0$ tends to
$-1$, so $\alpha(D,E)$, which does not vanish being given by an
integral of a positive function, tends to $0$. The paramter $k^2$ of
the elliptic curve tends to $(E-1)/(E+1)$ which is never 1, so that
$K''$ and $K'$ have a finite non-zero limit. Thus
$D\mapsto\alpha(D,E)$ is a real analytic function which is
non-constant on some interval. It has therefore only isolated critical
points.
\end{proof}
This parametrization allows us to plot the level sets and the orbits. We do
this in Fig.~\ref{f-5} using the Laplace--Runge--Lenz vector and the angular momentum
$L=z(D+2E)^{-\frac12}$ as coordinates.
\begin{figure} 
  \includegraphics[width=.4\textwidth]{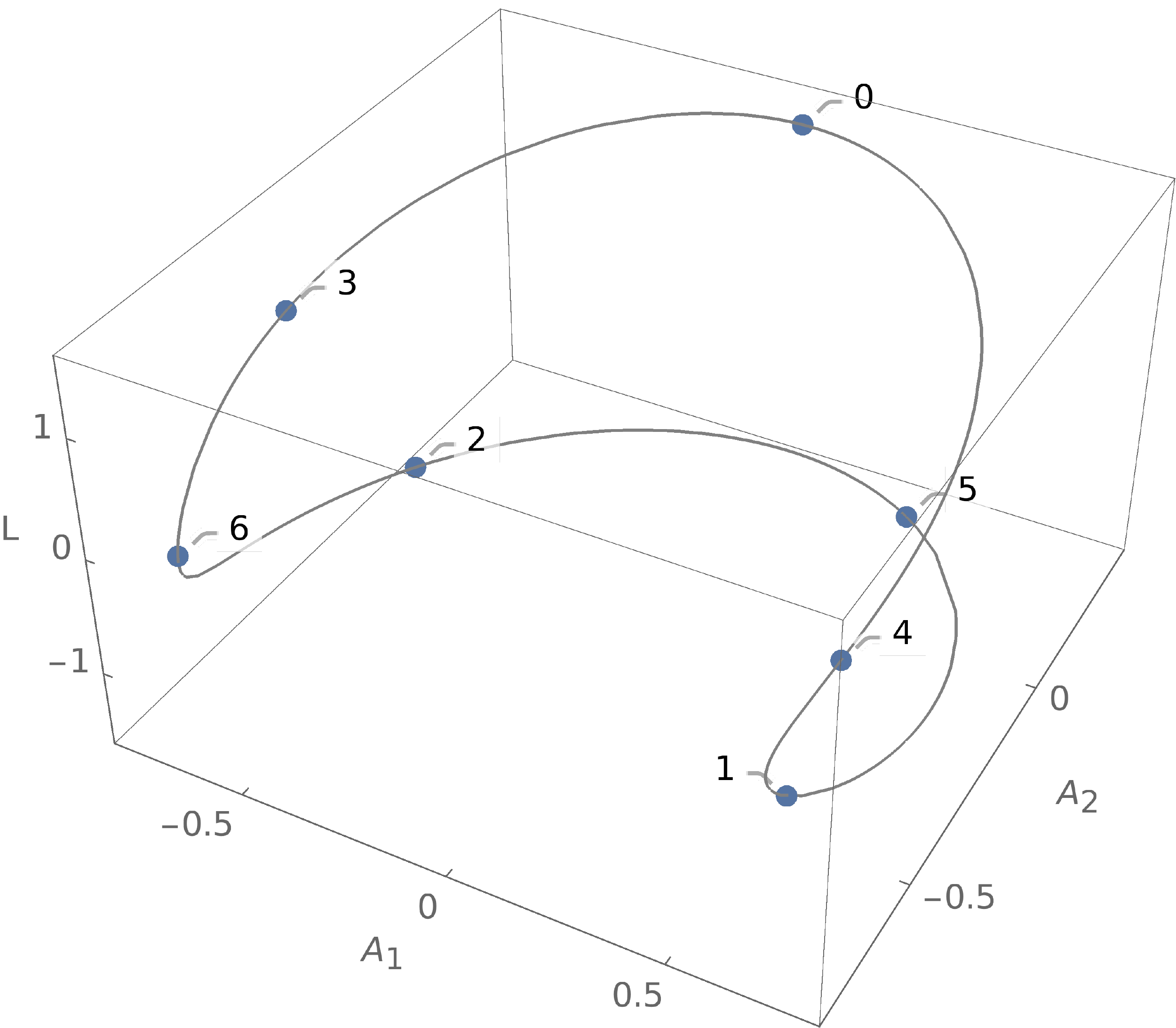}
  \includegraphics[width=.4\textwidth]{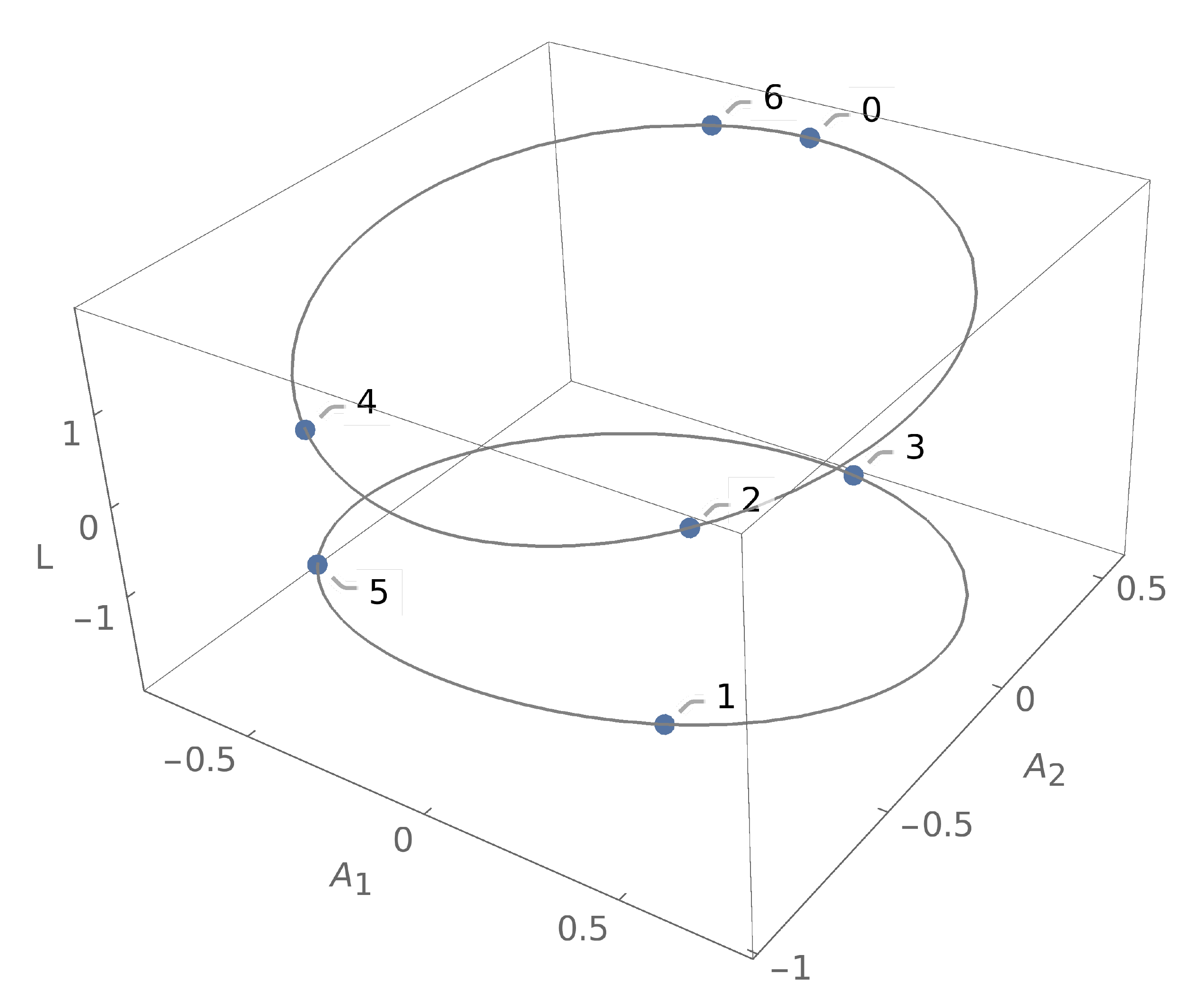}
  \caption{The level sets $X_{\mathbb R}(1.5,-0.2)$ (Type I, left) and
    $X_{\mathbb R}(2.5,-0.1)$ (Type II${}_+$, right) in the
    coordinates $A_1,A_2$ of the Laplace--Runge--Lenz vector and the angular
    momentum $L$, with six iterations of $t$.}
  \label{f-5}
\end{figure}

\begin{remark}
  In the negative energy case $E<0$ the motion takes place in a
  compact region of the plane so
  $X_{\mathbb R}(D,E)=\bar X_{\mathbb R}(D,E)$.  If $E>0$ one has
  collision points at infinity corresponding to hyperbolae with an asymptote
  parallel to the wall.
\end{remark}

\subsection*{Acknowledgments}
This research was inspired by a talk of Giovanni Gallavotti at ETH
Zurich in June 2019. I thank him for his talk and explanations, and
for providing me with a first draft of \cite{Gallavotti_Jauslin_2020}.
I also thank J\"urg Fr\"ohlich and Gian Michele Graf for their useful
comments. I am also grateful to Alexander P. Veselov who introduced me
to the Poncelet theorem and its relation with elliptic curves.

This paper is dedicated to the memory of Boris Dubrovin, whose unique
insights connecting integrable systems and algebraic geometry with a
deep root in classical analysis and geometry were always a great
inspiration for the author.

Figures 1,2 and 5 were produced with Mathematica.




  \bibliographystyle{plain}
  \bibliography{Boltzmann_Bibliography} 

\begin{thebibliography}{1}

\bibitem{Boltzmann_1868}
Ludwig Boltzmann.
\newblock {L}\"osung eines mechanischen {P}roblems.
\newblock {\em Wiener Berichte}, 58:1035--1044, 1868.
\newblock Wissenschaftliche Abhandlungen, Vol. I, p. 97--105.

\bibitem{Du_Val_1973}
Patrick Du~Val.
\newblock {\em Elliptic Functions and Elliptic Curves}.
\newblock London Mathematical Society Lecture Note Series. Cambridge University
  Press, 1973.

\bibitem{Gallavotti_2016}
Giovanni Gallavotti.
\newblock Ergodicity: a historical perspective. {E}quilibrium and
  {N}onequilibrium.
\newblock {\em The European Physical Journal H}, 41(3):181–259, September
  2016.

\bibitem{Gallavotti_Jauslin_2020}
Giovanni Gallavotti and Ian Jauslin.
\newblock A {T}heorem on {E}llipses, an {I}ntegrable {S}ystem and a {T}heorem
  of {B}oltzmann, 2020.
\newblock arXiv:2008.01955 [math.DS].

\bibitem{Griffiths_Harris_1977}
Phillip Griffiths and Joe Harris.
\newblock A {P}oncelet theorem in space.
\newblock {\em Comment. Math. Helv.}, 52(2):145--160, 1977.

\bibitem{Griffiths_Harris_1978}
Phillip Griffiths and Joseph Harris.
\newblock On {C}ayley's explicit solution to {P}oncelet's porism.
\newblock {\em Enseign. Math. (2)}, 24(1-2):31--40, 1978.

\bibitem{Hartshorne_1977}
Robin Hartshorne.
\newblock {\em Algebraic geometry}.
\newblock Springer-Verlag, New York-Heidelberg, 1977.
\newblock Graduate Texts in Mathematics, No. 52.

\bibitem{Moser_1962}
J.~Moser.
\newblock On invariant curves of area-preserving mappings of an annulus.
\newblock {\em Nachr. Akad. Wiss. G\"{o}ttingen Math.-Phys. Kl. II},
  1962:1--20, 1962.

\bibitem{van_Haandel_Heckmann_2009}
Maris van Haandel and Gert Heckman.
\newblock Teaching the {K}epler laws for freshmen.
\newblock {\em Math. Intelligencer}, 31(2):40--44, 2009.

\end{thebibliography}

\end{document}